
\documentclass[reqno]{amsart}
\usepackage{amsmath,amssymb}
\usepackage{color,graphicx}
\usepackage[latin1]{inputenc}
\usepackage{times}
\usepackage{float}
\usepackage[dvips]{hyperref}
\usepackage{amscd}
\usepackage[all]{xy}
\usepackage{color}

\DeclareMathOperator {\id}{id}

\DeclareMathOperator {\Div}{Div}

\DeclareMathOperator {\Mat}{Mat}
\DeclareMathOperator {\rank}{rank}

\newcommand {\RR}{{\mathbb R}}
\newcommand {\ZZ}{{\mathbb Z}}
\newcommand {\TT}{{\mathbb T}}

\newcommand {\calO}{{\mathcal O}}
\newcommand {\calD}{{\mathcal D}}
\newcommand {\calK}{{\mathcal K}}

\newcommand {\arxiv}[3]{#1, \emph{#2}, preprint \href{http://arxiv.org/abs/#3}{arxiv:#3}.}


\parindent 0mm
\parskip 1ex plus 0.3ex
\topmargin 5mm
\textheight 220mm
\textwidth 145mm
\oddsidemargin 8mm
\evensidemargin 8mm

\newtheorem {theorem}{Theorem}[section]

\newtheorem {lemma}[theorem]{Lemma}

\newtheorem {corollary}[theorem]{Corollary}
\theoremstyle {definition}
\newtheorem {definition}[theorem]{Definition}
\newtheorem {example}[theorem]{Example}

\newtheorem {notation}[theorem]{Notation}
\theoremstyle {remark}
\newtheorem {remark}[theorem]{Remark}

\hyphenation {Kai-sers-lau-tern}

\begin{document}

\title {Chern classes of tropical vector bundles}
\author {Lars Allermann}
\address {Lars Allermann, Fachbereich Mathematik, TU Kaiserslautern, Postfach 3049, 67653 Kaiserslautern, Germany}
\email {allerman@mathematik.uni-kl.de}

\begin{abstract}
We introduce tropical vector bundles, morphisms and rational
sections of these bundles and define the pull-back of a tropical
vector bundle and of a rational section along a morphism. Most of
the definitions presented here for tropical vector bundles will be
contained in \cite{T09} for the case of line bundles. Afterwards
we use the bounded rational sections of a tropical vector bundle
to define the Chern classes of this bundle and prove some basic
properties of Chern classes. Finally we give a complete
classification of all vector bundles on an elliptic curve up to
isomorphisms.
\end{abstract}

\maketitle

\section{Tropical vector bundles} \label{sec-vectorbundles}

In this section we will introduce our basic objects such as
tropical vector bundles, morphisms of tropical vector bundles and
rational sections.

\begin{definition}[Tropical matrices]
  A tropical matrix is an ordinary matrix with entries in the tropical semi-ring
  $$(\TT=\RR \cup \{-\infty\},\oplus,\odot),$$
  where $a \oplus b = \max\{a,b\}$ and $a \odot b = a+b$.
  We denote by $\Mat(m \times n, \TT)$ the set of
  tropical $m \times n$ matrices. Let $A \in \Mat(m \times n,
  \TT)$ and $B \in \Mat(n \times p, \TT)$. We can form a tropical
  matrix product $A \odot B := (c_{ij}) \in \Mat(m \times p,\TT)$
  where $c_{ij}=\bigoplus_{k=1}^m a_{ik} \odot b_{kj}.$ Moreover,
  let $G(r \times s) \subseteq \Mat(r \times s, \TT)$ be the subset of
  tropical matrices with at most one finite entry in every row. Let
  $G(r)$ be the subset of $G(r \times r)$ containing all tropical
  matrices with exactly one finite entry in every row and every column.
\end{definition}

\begin{remark} \label{remark-map_f_A}
  Note that a matrix $A \in G(r \times s)$ does, in general, not induce a map
  $f_A: \RR^s \rightarrow \RR^r: {x \mapsto A \odot x}$ as the
  vector $A \odot x$ may contain entries that are $-\infty$. To obtain a
  map $f_A: \RR^s \rightarrow \RR^r$ anyway we use the following definition:
  Let $x \in \RR^s$ and $A \odot x = (y_1,\ldots,y_r) \in \TT^r$ with
  $y_i=-\infty$ for $i \in I$ and $y_i \in \RR$ for $i \not\in I$.
  Then we define $f_A(x) := (\widetilde{y_1},\ldots,\widetilde{y_r})
  \in \RR^r$ with $\widetilde{y_i}:=0$ for $i \in I$ and
  $\widetilde{y_i}:=y_i$ for $i \not\in I$.
\end{remark}

\begin{notation}
  For an element $\sigma$ of the symmetric group $S_r$ we denote by $A_\sigma$
  the tropical matrix $A_\sigma = (a_{ij}) \in \Mat(r \times r, \TT)$ given by $$a_{ij} := \left\{ \begin{array}{rl}
  0, & \text{ if } j=\sigma(i) \\ -\infty, & \text{ else.}\end{array} \right.$$
  Moreover, for $a_1,\ldots,a_r \in \RR$ we denote by
  $D(a_1,\ldots,a_r)$ the tropical diagonal matrix $D(a_1,\ldots,a_r)=(d_{ij}) \in \Mat(r \times r, \TT)$
  given by $$d_{ij} := \left\{ \begin{array}{rl}
  a_i, & \text{ if } i=j \\ -\infty, & \text{ else.}\end{array} \right.$$
  Note that every element $M \in G(r)$ can be written as $M=A_\sigma \odot D(a_1,\ldots,a_r)$
  for some $\sigma \in S_r$ and some numbers $a_1,\ldots,a_r \in
  \RR$. Moreover, $G(r)$ together with tropical
  matrix multiplication is a group with neutral element
  $E:=D(0,\ldots,0)$.
\end{notation}

\begin{lemma}
  $G(r)$ is precisely the set of invertible tropical matrices, i.e.
  $$G(r)=\{A \in \Mat(r \times r, \TT)| \exists A' \in \Mat(r \times r,
  \TT): A \odot A'=A' \odot A=E\}.$$
\end{lemma}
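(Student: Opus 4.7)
The plan has two directions: inclusion $G(r) \subseteq \{\text{invertibles}\}$ and the converse.

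For the first direction, I would use the presentation $M = A_\sigma \odot D(a_1,\ldots,a_r)$ already noted in the preceding notation. The candidate inverse is $M' := D(-a_1,\ldots,-a_r) \odot A_{\sigma^{-1}}$. A short direct computation using associativity of tropical multiplication, together with the easy identities $A_\sigma \odot A_{\sigma^{-1}} = E$, $A_{\sigma^{-1}} \odot A_\sigma = E$ and $D(a_1,\ldots,a_r) \odot D(-a_1,\ldots,-a_r) = E$, then yields $M \odot M' = M' \odot M = E$. The only thing to check carefully is that, when moving $A_{\sigma^{-1}}$ past a diagonal matrix, one correctly tracks the permutation of entries; this is a formal rearrangement.

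For the converse, suppose $M \odot M' = M' \odot M = E$. I would first show that every row of $M$ contains exactly one finite entry. The condition $(M \odot M')_{ii} = 0$ forces each row $i$ of $M$ to contain at least one finite entry (paired with a corresponding finite entry in column $i$ of $M'$). For the ``at most one'' part, suppose row $i$ of $M$ had two finite entries in columns $k_1 \neq k_2$. The off-diagonal equations $(M \odot M')_{ij} = -\infty$ for $j \neq i$ then force $m'_{k_1,j} = m'_{k_2,j} = -\infty$ for all $j \neq i$, so rows $k_1$ and $k_2$ of $M'$ can only have finite entries in column $i$. Feeding this back into $M' \odot M = E$, the diagonal condition at $(k_1,k_1)$ and the off-diagonal condition at $(k_1,k_2)$ produce contradictory requirements on the finite values $m'_{k_1,i}, m_{i,k_1}, m_{i,k_2}$. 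This contradiction is the technical heart of the proof and the step I expect to be the main obstacle, since one has to keep careful track of which entries are forced to be finite versus $-\infty$.

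To handle columns, I would simply transpose. Since tropical multiplication satisfies $(A \odot B)^T = B^T \odot A^T$ and $E^T = E$, the equations $M \odot M' = M' \odot M = E$ give $(M')^T \odot M^T = M^T \odot (M')^T = E$, so $M^T$ is invertible as well. Applying the row statement already proved to $M^T$ yields that each column of $M$ contains exactly one finite entry. Combined with the row statement, this gives $M \in G(r)$, completing the proof.
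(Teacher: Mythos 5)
Your proposal is correct and follows essentially the same route as the paper: the core contradiction (two finite entries $m_{i,k_1},m_{i,k_2}$ in one row force rows $k_1,k_2$ of $M'$ to be finite only in column $i$, and then one diagonal and one off-diagonal entry of $M'\odot M$ give incompatible conditions) is exactly the paper's argument, which the paper writes out for $a_{11},a_{12}$. Your explicit inverse $D(-a_1,\ldots,-a_r)\odot A_{\sigma^{-1}}$ for the easy inclusion and the transpose trick for columns are just slightly more detailed packaging of steps the paper treats as obvious or by symmetry.
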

\begin{proof}
  The inclusion $$G(r) \subseteq \{A \in \Mat(r \times r, \TT)| \exists A' \in \Mat(r \times r,
  \TT): A \odot A'=A' \odot A=E\}$$ is obvious. Thus, let $A, A' \in \Mat(r \times r,
  \TT)$ be given such that $A \odot A'=A' \odot A=E$. Assume that
  $A=(a_{ij})$ contains more than one finite entry in a row or column. For simplicity of notation
  we assume that $a_{11}, a_{12} \neq -\infty$. As $A \odot A' =
  E$ we can conclude that the first two rows of $A'$ look as
  follows:
  $$A' = \left( \begin{array}{c}
    \begin{array}{cccc} \alpha & -\infty & \dots & -\infty \end{array} \\
    \begin{array}{cccc} \beta & -\infty & \dots & -\infty \end{array} \\
    \hline \\
    \text{\Huge{*}}
  \end{array} \right) \text{ for some } \alpha, \beta \in \RR.$$
  As moreover $A' \odot A =E$ holds, we can conclude from the second
  line of $A'$ and the first column of $A$ that
  $$a_{11} + \beta = -\infty,$$
  which is a contradiction to $a_{11}, \beta \in \RR$.
\end{proof}

We have all requirements now to state our main definition:

\begin{definition}[Tropical vector bundles] \label{def-vectorbundle}
  Let $X$ be a tropical cycle (cf. \cite[definition 5.12]{AR07}). A \emph{tropical vector bundle} over
  $X$ of rank $r$ is a tropical cycle $F$ together with a morphism $\pi: F
  \rightarrow X$ (cf. \cite[definition 7.1]{AR07}) and a finite open covering $\{U_1,\ldots,U_s\}$ of $X$ as well as
  a homeomorphism $\Phi_i: \pi^{-1}(U_i) \stackrel{\cong}{\rightarrow} U_i \times
  \RR^r$ for every $i \in \{1,\ldots,s\}$
  such that
  \begin{enumerate}
    \item for all $i$ we obtain a commutative diagram
        $$\begin{xy}
          \xymatrix{
            \pi^{-1}(U_i) \ar[r]^{\Phi_i} \ar[rd]_\pi & U_i \times \RR^r \ar[d]^{p_1}\\
            & U_i
          }
        \end{xy}$$
        where $p_1:U_i \times \RR^r \rightarrow U_i$ is the
        projection to the first factor,
      \item for all $i,j$ the composition $p_j^{(i)} \circ
        \Phi_i:\pi^{-1}(U_i)\rightarrow \RR$ is a regular invertible
        function (cf. \cite[definition 6.1]{AR07}), where $p_j^{(i)}: U_i \times \RR^r \rightarrow \RR:
        (x,(a_1,\ldots,a_r)) \mapsto a_j$,
    \item for every $i,j \in \{1,\ldots,s\}$ there exists a
    \emph{transition map} $M_{ij}: U_i \cap U_j \rightarrow G(r)$ such that $$\Phi_j \circ \Phi_i^{-1}: (U_i \cap
    U_j) \times \RR^r \rightarrow (U_i \cap U_j) \times \RR^r$$ is given by $(x,a)
    \mapsto (x, M_{ij}(x) \odot a)$ and the entries of $M_{ij}$ are
    regular invertible functions on $U_i \cap U_j$ or constantly $-\infty$,
    \item there exist representatives $F_0$ of $F$ and $X_0$ of $X$ such that
    $F_0= \{ \pi^{-1}(\tau)| \tau \in X_0 \}$ and
    $\omega_{F_0}(\pi^{-1}(\tau))=\omega_{X_0}(\tau)$ for all maximal polyhedra $\tau \in X_0$.
  \end{enumerate}
  An open set $U_i$ together with the map $\Phi_i: \pi^{-1}(U_i) \stackrel{\cong}{\rightarrow} U_i \times \RR^r$
  is called a \emph{local trivialization} of $F$. Tropical vector bundles of rank one are called \emph{tropical line bundles}.
\end{definition}

\begin{remark}
  Let $V_1,\ldots,V_t$ be any open covering of $X$. Then the
  covering $\{U_i \cap V_j\}$ together with the restricted
  homeomorphisms $\Phi_i|_{\pi^{-1}(U_i \cap V_j)}$ and transition
  maps $M_{ij}|_{(U_i \cap V_k) \cap (U_j \cap V_l)}$ fulfills all
  requirements of definition \ref{def-vectorbundle}, too, and hence
  defines again a vector bundle. As the open covering, the
  homeomorphisms and the transition maps are part of the data of
  definition \ref{def-vectorbundle} this new bundle is (according to our definition)
  different from our initial one even though they are ``the same'' in
  some sense. Hence, in the following we will identify vector bundles that arise by
  such a construction one from the other:
\end{remark}

\begin{definition} \label{def-identifyequalbundles}
  Let $\pi: F \rightarrow X$ together with open covering
  $U_1,\ldots,U_s$, homeomorphisms $\Phi_i$ and transition maps
  $M_{ij}$ and $\pi: F \rightarrow X$ together with open covering
  $V_1,\ldots,V_t$, homeomorphisms $\Psi_i$ and transition maps
  $N_{ij}$ be two tropical vector bundles according to definition
  \ref{def-vectorbundle}. We will identify these vector bundles
  if the vector bundles $\pi: F \rightarrow X$ with open covering
  $\{U_i \cap V_j\}$ and restricted homeomorphisms $\Phi_i|_{\pi^{-1}(U_i \cap V_j)}$
  respectively $\Psi_j|_{\pi^{-1}(U_i \cap V_j)}$ and transition maps
  $M_{ij}|_{(U_i \cap V_k) \cap (U_j \cap V_l)}$ respectively
  $N_{kl}|_{(U_i \cap V_k) \cap (U_j \cap V_l)}$ are equal.
\end{definition}

\begin{remark} \label{remark-refinedcovering}
  Let $\pi_1: F_1 \rightarrow X$ and $\pi_2: F_2 \rightarrow X$ be two vector bundles on
  $X$. By definition \ref{def-identifyequalbundles} we can always assume that $F_1$
  and $F_2$ satisfy definition \ref{def-vectorbundle} with the same open
  covering.
\end{remark}

\begin{remark}
  Let $\pi: F \rightarrow X$ be a vector bundle with open covering
  $U_1,\ldots,U_s$ and transition maps $M_{ij}$ as in definition \ref{def-vectorbundle}. On the common intersection
  $U_i \cap U_j \cap U_k$ we obviously have $M_{ij}(x) = M_{kj}(x) \odot
  M_{ik}(x)$. This last equation is called \emph{cocycle condition}.
  Conversely, given an open covering $U_1,\ldots,U_s$ of $X$
  and maps $M_{ij}: U_i \cap U_j \rightarrow G(r)$
  such that the entries of $M_{ij}(x)$ are regular invertible functions
  on $U_i \cap U_j$ or constantly $-\infty$ and the cocycle condition $M_{ij}(x) = M_{kj}(x) \odot M_{ik}(x)$ holds on $U_i \cap U_j \cap
  U_k$, we can construct a vector bundle $\pi: F \rightarrow X$ with this given open covering
  and transition functions $M_{ij}$: Take the disjoint union $\coprod_{i=1}^s (U_i \times
  \RR^r)$ and identify points $(x,y) \sim (x,M_{ij}(x) \odot a)$
  to obtain the topological space $|F|$. We have to equip this
  space with the structure of a tropical cycle. As this construction is exactly the same as for tropical line bundles,
  we only sketch it here and refer to \cite{T09} for more details. Let
  $(((X_0,|X_0|,\{ \varphi_\sigma \}),\omega_{X_0}), \{\Phi_\sigma \})$ be a representative of $X$. We define
  $F_0:= \{\pi^{-1}(\sigma)| \sigma \in X_0\}$ and
  $\omega_{F_0}(\pi^{-1}(\sigma)):=\omega_{X_0}(\sigma)$ for all
  maximal polyhedra $\sigma \in X_0$. Our next step is to
  construct the polyhedral charts $\widetilde{\varphi}_{\pi^{-1}(\sigma)}$ for $F_0$:
  Let $\sigma \in X_0$ be given and let $U_{i_1},\ldots,U_{i_t}$ be all open
  sets with non-empty intersection with $\sigma$. Moreover, let
  $\{V_i | i \in I\}$ be the set of all connected components of all $\sigma
  \cap U_{i_k}$. Every such set $V_i$ comes from a set $U_{j(i)}$ of the given open
  covering. Hence, for every pair $k,l \in I$ we have a restricted transition map
  $N_{kl}:=M_{j(k),j(l)}|_{V_k \cap V_l}$. This implies that for all $k,l \in I$
  the entries of $N_{kl} \circ \Phi_\sigma^{-1}$ are (globally) integer affine linear functions
  on $V_k \cap V_l$. As $\sigma$ is simply connected, for every such entry $h \in \calO^{*}(V_k \cap V_l)$
  of $N_{kl}$ there exists a unique continuation $\widetilde{h} \in \calO^{*}(\sigma)$.
  Hence we can extend all transition maps $N_{kl}: V_k \cap V_l \rightarrow G(r)$ to maps $N_{kl}':\sigma
  \rightarrow G(r)$. Now we choose for every $i \in I$ a point $P_i \in V_i$ and for all
  pairs $k,l \in I$ a path $\gamma_{kl}:[0,1] \rightarrow \sigma$ from $P_k$ to $P_l$.
  Let $k,l \in I$ be given. As the image of $\gamma_{kl}$ is compact there exists a
  finite covering $V_{\mu_1},\ldots,V_{\mu_c}$ of
  $\gamma_{kl}([0,1])$. For $x \in V_l$ we set $$S(\gamma_{kl})(x):=
  (N_{\mu_{1},\mu_{2}}'(x))^{-1} \odot \cdots \odot (N_{\mu_{c-1},\mu_{c}}'(x))^{-1} \in
  G(r).$$ Now fix some $k_0 \in I$. For all $l \in I$ we define maps
  $$\widetilde{\varphi}_{\pi^{-1}(\sigma)}^{(l)}: V_l \times \RR^r \cong \pi^{-1}(V_l) \rightarrow \RR^{n_\sigma + r}
  : (x,a) \mapsto (\varphi_\sigma(x),S(\gamma_{k_0 l})(x) \odot a).$$
  These maps agree on overlaps and hence glue together to an embedding
  $$\widetilde{\varphi}_{\pi^{-1}(\sigma)}: \pi^{-1}(\sigma) \rightarrow \RR^{n_\sigma + r}.$$
  In the same way we can construct the fan charts $\widetilde{\Phi}_{\pi^{-1}(\sigma)}$.
  Then we define $F$ to be the equivalence class
  $$F:=\left[(((F_0,|F_0|,\{ \widetilde{\varphi}_{\pi^{-1}(\sigma)} \}),\omega_{F_0}), \{\widetilde{\Phi}_{\pi^{-1}(\sigma)} \})
  \right].$$
\end{remark}

\begin{example} \label{ex-ellipticcurve}
  Throughout the chapter, the curve $X:=X_2$ from \cite[example 5.5]{AR07} will serve us as a central
  example. Recall that $X$ arises by gluing open fans as
  drawn in the figure:
  \begin{center}
    \begin{picture}(0,0)%
\includegraphics{pic/Ex_EllipCurve.pstex}%
\end{picture}%
\setlength{\unitlength}{2072sp}%
\begingroup\makeatletter\ifx\SetFigFont\undefined%
\gdef\SetFigFont#1#2#3#4#5{%
  \reset@font\fontsize{#1}{#2pt}%
  \fontfamily{#3}\fontseries{#4}\fontshape{#5}%
  \selectfont}%
\fi\endgroup%
\begin{picture}(2969,3058)(6639,-3615)
\put(6931,-1591){\makebox(0,0)[lb]{\smash{{\SetFigFont{6}{7.2}{\familydefault}{\mddefault}{\updefault}{\color[rgb]{0,0,0}$X$}%
}}}}
\end{picture}%

  \end{center}
  Moreover, recall from \cite[definition 5.4]{AR07} that the transition functions
  between these open fans composing $X$ are integer affine linear. This
  implies that the curve $X$ has a well-defined lattice length
  $L$. We can cover $X$ by open sets $U_1, U_2, U_3$ as drawn in the following figure:
  \begin{center}
    \begin{picture}(0,0)%
\includegraphics{pic/elliptic_curve.pstex}%
\end{picture}%
\setlength{\unitlength}{1973sp}%
\begingroup\makeatletter\ifx\SetFigFont\undefined%
\gdef\SetFigFont#1#2#3#4#5{%
  \reset@font\fontsize{#1}{#2pt}%
  \fontfamily{#3}\fontseries{#4}\fontshape{#5}%
  \selectfont}%
\fi\endgroup%
\begin{picture}(7648,2029)(3851,-3665)
\put(7276,-2761){\makebox(0,0)[lb]{\smash{{\SetFigFont{6}{7.2}{\familydefault}{\mddefault}{\updefault}{\Huge \color[rgb]{0,0,0}$=$}%
}}}}
\put(3866,-2136){\makebox(0,0)[lb]{\smash{{\SetFigFont{6}{7.2}{\familydefault}{\mddefault}{\updefault}{\color[rgb]{0,0,0}$X$}%
}}}}
\put(8381,-2676){\makebox(0,0)[lb]{\smash{{\SetFigFont{6}{7.2}{\familydefault}{\mddefault}{\updefault}{\color[rgb]{0,0,0}$U_1$}%
}}}}
\put(10626,-1771){\makebox(0,0)[lb]{\smash{{\SetFigFont{6}{7.2}{\familydefault}{\mddefault}{\updefault}{\color[rgb]{0,0,0}$U_3$}%
}}}}
\put(9751,-3276){\makebox(0,0)[lb]{\smash{{\SetFigFont{6}{7.2}{\familydefault}{\mddefault}{\updefault}{\color[rgb]{0,0,0}$U_2$}%
}}}}
\end{picture}%

  \end{center}
  The easiest way to construct a (non-trivial) vector bundle of rank $r$ on $X$ is
  fixing a (non-trivial) transition map $M_{12}: U_1 \cap U_2 \rightarrow G(r)$ and
  defining $M_{23}: U_2 \cap U_3 \rightarrow G(r)$, $M_{31}:U_3 \cap U_1 \rightarrow G(r)$ to be the trivial maps
  $x \mapsto E$ for all $x$. We will see later that in fact every vector
  bundle of rank $r$ on $X$ arises in this way.
\end{example}

Knowing what tropical vector bundles are, there are a few notions
related to this definition we want to introduce now:

\begin{definition}[Direct sums of vector bundles]
  Let $\pi_1: F_1 \rightarrow X$ and $\pi_2: F_2 \rightarrow X$ be
  two vector bundles of rank $r$ and $r'$, respectively, with a common open covering $U_1,\ldots,U_s$
  and transition maps $M_{ij}^{(1)}$ and $M_{ij}^{(2)}$,
  respectively, satisfying definition \ref{def-vectorbundle} (see remark \ref{remark-refinedcovering}).
  We define the \emph{direct sum bundle} $\pi: F_1 \oplus F_2 \rightarrow X$ to
  be the vector bundle of rank $r+r'$ we obtain from the gluing data
  \begin{itemize}
    \item $U_1,\ldots,U_s$
    \item $M_{ij}^{(1)} \times M_{ij}^{(2)}: U_i \cap U_j \rightarrow
    G(r+r'): x \mapsto \left( \begin{array}{cc} M_{ij}^{(1)}(x) & -\infty \\ -\infty & M_{ij}^{(2)}(x) \end{array} \right)$.
  \end{itemize}
\end{definition}

\begin{definition}[Subbundles] \label{def-subbundles}
  Let $\pi: F \rightarrow X$ be a vector bundle with open covering
  $U_1,\ldots,U_s$ and homeomorphisms $\Phi_i$ according to definition
  \ref{def-vectorbundle}. A subcycle $E \in Z_l(F)$ is called a subbundle of rank $r'$ of $F$ if
  $\pi|_{E}: E \rightarrow X$ is a vector bundle of rank $r'$ such
  that we have for all $i=1,\ldots,s$: $$\Phi_i|_{(\pi|_{E})^{-1}(U_i)}: (\pi|_{E})^{-1}(U_i)
  \stackrel{\cong}{\rightarrow} U_i \times \langle
  e_{j_1},\ldots,e_{j_{r'}}\rangle_{\RR}$$
  for some $1\leq j_1< \ldots < j_{r'} \leq r$, where the $e_j$ are the standard basis vectors in $\RR^r$.
\end{definition}

\begin{remark} \label{remark-subbundles}
  If $\pi: F \rightarrow X$ is a vector bundle of rank $r$ with
  subbundle $E$ of rank $r'$ like in definition \ref{def-subbundles} this implies that there exists another subbundle
  $E'$ of rank $r-r'$ with $$\Phi_i|_{(\pi|_{E'})^{-1}(U_i)}: (\pi|_{E'})^{-1}(U_i)
  \stackrel{\cong}{\rightarrow} U_i \times \langle
  e_j| j \not\in \{j_1,\ldots,j_{r'} \}\rangle_{\RR}$$ and hence
  that $F=E \oplus E'$ holds.
\end{remark}

\begin{definition}[Decomposable bundles]
  Let $\pi: F \rightarrow X$ be a vector bundle of rank $r$. We say that $F$
  is \emph{decomposable} if there exists a subbundle $\pi|_{E}: E \rightarrow
  X$ of $F$ of rank $1 \leq r' < r$. Otherwise we call
  $F$ an \emph{indecomposable vector bundle}.
\end{definition}

As announced in the very beginning of this section we also want to
talk about morphisms and, in particular, isomorphisms of tropical
vector bundles:

\begin{definition}[Morphisms of vector bundles] \label{def-morphism}
  A morphism of vector bundles $\pi_1:F_1 \rightarrow X$ of rank $r$ and $\pi_2:F_2 \rightarrow X$
  of rank $r'$ is a morphism $\Psi: F_1 \rightarrow F_2$ of tropical cycles such that
  \begin{enumerate}
    \item $\pi_1=\pi_2 \circ \Psi$ and
    \item there exist an open covering $U_1,\ldots,U_s$ according to definition \ref{def-vectorbundle}
    for both $F_1$ and $F_2$ (see remark \ref{remark-refinedcovering}) and maps $A_i: U_i \rightarrow G(r' \times r)$ for all $i$ such that
    $$\Phi_i^{F_2} \circ \Psi \circ (\Phi_i^{F_1})^{-1}:U_i \times \RR^r \rightarrow U_i \times \RR^{r'}$$
    is given by $(x,a) \mapsto (x,f_{A_i(x)}(a))$ (cf. \ref{remark-map_f_A}) and the entries of $A_i$ are
    regular invertible functions on $U_i$ or constantly $-\infty$.
  \end{enumerate}
  An isomorphism of tropical vector bundles is a morphism of
  vector bundles $\Psi: F_1 \rightarrow F_2$ such that there exists a
  morphism of vector bundles $\Psi': F_2 \rightarrow F_1$ with
  $\Psi' \circ \Psi = \id = \Psi \circ \Psi'$.
\end{definition}

\begin{lemma} \label{lemma-isomorphicbundles}
  Let $\pi_1:F_1 \rightarrow X$ and $\pi_2:F_2 \rightarrow X$ be
  two vector bundles of rank $r$ over $X$. Then the following are equivalent:
  \begin{enumerate}
    \item There exists an isomorphism of vector bundles $\Psi: F_1 \rightarrow F_2$.
    \item There exist a common open covering $U_1,\ldots,U_s$ of $X$ and transition maps
    $M_{ij}^{(1)}$ for $F_1$ and $M_{ij}^{(2)}$ for $F_2$ satisfying definition \ref{def-vectorbundle}
    (cf. remark \ref{remark-refinedcovering}) and maps ${E_i: U_i \rightarrow G(r)}$ for $i=1,\ldots,s$ such that
      \begin{itemize}
        \item the entries of $E_i$ are regular invertible functions on $U_i$ or constantly $-\infty$ and
        \item for all $i,j$ holds $E_j(x) \odot M_{ij}^{(1)}(x)=M_{ij}^{(2)}(x) \odot E_i(x)$ for all $x \in U_i \cap U_j$.
      \end{itemize}
  \end{enumerate}
\end{lemma}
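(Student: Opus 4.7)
The plan is to show the two directions separately, treating (a) $\Rightarrow$ (b) by reading off the matrices from the local descriptions of $\Psi$, and (b) $\Rightarrow$ (a) by building $\Psi$ as a suitable gluing of fiberwise multiplications by the $E_i$.

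For (a) $\Rightarrow$ (b), start from an isomorphism $\Psi:F_1\rightarrow F_2$. By definition~\ref{def-morphism} (after passing to a common refinement as in remark~\ref{remark-refinedcovering}), there is a covering $U_1,\ldots,U_s$ and maps $A_i: U_i\rightarrow G(r\times r)$ describing $\Psi$ in local trivializations, and similarly $B_i$ describing $\Psi^{-1}$. Composing the local expressions for $\Psi$ and $\Psi^{-1}$ and using that both compositions are the identity shows $A_i(x)\odot B_i(x)=B_i(x)\odot A_i(x)=E$ for every $x\in U_i$; by the lemma characterizing invertible tropical matrices, $A_i(x),B_i(x)\in G(r)$. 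Hence $f_{A_i(x)}=A_i(x)\odot \cdot$ is an honest tropical multiplication and I may set $E_i:=A_i$. The regularity assumption on the entries of $A_i$ is exactly the first bullet in~(b). Finally, computing $\Phi_j^{F_2}\circ\Psi\circ(\Phi_j^{F_1})^{-1}$ in two ways on $U_i\cap U_j$ --- directly via $A_j$, or by inserting the transitions $\Phi_j^{F_2}\circ(\Phi_i^{F_2})^{-1}$ and $\Phi_i^{F_1}\circ(\Phi_j^{F_1})^{-1}$ --- yields
\[
A_j(x)=M_{ij}^{(2)}(x)\odot A_i(x)\odot M_{ji}^{(1)}(x).
\]
Multiplying on the right by $M_{ij}^{(1)}(x)$ and using $M_{ji}^{(1)}\odot M_{ij}^{(1)}=E$ (a consequence of the cocycle condition) gives the relation in the second bullet.

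For (b) $\Rightarrow$ (a), take the data $U_i,\,E_i,\,M_{ij}^{(1)},\,M_{ij}^{(2)}$ and define, for each $i$, a local isomorphism
\[
\Psi_i:=(\Phi_i^{F_2})^{-1}\circ\bigl((x,a)\mapsto(x,E_i(x)\odot a)\bigr)\circ\Phi_i^{F_1}:\pi_1^{-1}(U_i)\longrightarrow\pi_2^{-1}(U_i).
\]
The compatibility $E_j\odot M_{ij}^{(1)}=M_{ij}^{(2)}\odot E_i$ is precisely what makes $\Psi_i$ and $\Psi_j$ agree on $\pi_1^{-1}(U_i\cap U_j)$ when read through the two sets of trivializations, so they glue to a continuous map $\Psi:F_1\rightarrow F_2$ with $\pi_1=\pi_2\circ\Psi$. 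That $\Psi$ is a morphism of tropical cycles, not merely continuous, follows because its local description $(x,a)\mapsto(x,E_i(x)\odot a)$ has entries that are regular invertible functions on $U_i$, and all relevant weights of $F_1$ and $F_2$ match the weights on $X$ by item~(d) of definition~\ref{def-vectorbundle}. Applying the same construction to the maps $x\mapsto E_i(x)^{-1}\in G(r)$ (which satisfy the analogous compatibility because the $M_{ij}^{(k)}$ take values in the group $G(r)$) produces a two-sided inverse $\Psi'$, so $\Psi$ is an isomorphism.

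The main obstacle is the bookkeeping in (a) $\Rightarrow$ (b): one must track carefully that the matrices $A_i$ locally describing $\Psi$ actually land in the \emph{invertible} set $G(r)$, and one must derive the cocycle-style identity from commuting the change-of-trivialization squares for $F_1$ and $F_2$ with $\Psi$. Once the invertibility is established from the lemma on invertible tropical matrices, the rest is a direct manipulation using the cocycle relation and the identity $M_{ji}^{(k)}\odot M_{ij}^{(k)}=E$.
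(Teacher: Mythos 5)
Your proposal is correct and follows essentially the same route as the paper: in (a)~$\Rightarrow$~(b) you take the matrices $A_i$ from the local description of $\Psi$, argue their invertibility so that $E_i:=A_i$ lands in $G(r)$, and obtain the compatibility relation from the commuting change-of-trivialization square; in (b)~$\Rightarrow$~(a) you glue the local maps $(x,a)\mapsto(x,E_i(x)\odot a)$ and build the inverse from $E_i^{-1}$, exactly as in the paper's proof.
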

\begin{proof}
  $(a) \Rightarrow (b)$: We claim that the maps $A_i: U_i \rightarrow G(r \times r)$ of definition
  \ref{def-morphism} are the wanted maps $E_i$. As $\Psi$ is an isomorphism we can conclude
  that $A_i(x)$ is an invertible matrix for all $x \in U_i$, i.e. that $A_i: U_i \rightarrow G(r)$.
  Hence it remains to check
  that $A_j(x) \odot M_{ij}^{(1)}(x)=M_{ij}^{(2)}(x) \odot A_i(x)$ holds for all $x \in U_i \cap
  U_j$: Let $i,j$ be given. As $\Psi: F_1 \rightarrow F_2$ is an
  isomorphism, the diagram
  $$\begin{xy}
    \xymatrix@C=2.5cm{
      (U_i \cap U_j) \times \RR^r \ar[r]^{\Phi_i^{F_2} \circ \Psi \circ (\Phi_i^{F_1})^{-1}} \ar[d]_{\Phi_j^{F_1} \circ (\Phi_i^{F_1})^{-1}} &
      (U_i \cap U_j) \times \RR^r \ar[d]^{\Phi_j^{F_2} \circ (\Phi_i^{F_2})^{-1}}\\
      (U_i \cap U_j) \times \RR^r \ar[r]_{\Phi_j^{F_2} \circ \Psi \circ (\Phi_j^{F_1})^{-1}} & (U_i \cap U_j) \times \RR^r
    }
  \end{xy}$$
  commutes. Hence $A_j(x) \odot M_{ij}^{(1)}(x)=M_{ij}^{(2)}(x) \odot
  A_i(x)$ holds.\\
  $(b) \Rightarrow (a)$: Conversely, let the maps $E_i: U_i
  \rightarrow G(r)$ be given. The equation $$E_j(x) \odot M_{ij}^{(1)}(x)=M_{ij}^{(2)}(x) \odot
  E_i(x)$$ for all $x \in U_i \cap U_j$ ensures that the maps
  $$U_i \times \RR^r \rightarrow U_i \times \RR^r: (x,a) \mapsto (x,E_i(x) \odot
  a)$$ on the local trivializations can be glued to a globally defined
  map $\Psi: |F_1| \rightarrow |F_2|$. Moreover, this map is a morphism
  as $\pi_1,\pi_2$ are morphisms and the maps $p_j^{(i)} \circ \Phi_i^{F_1}$,
  $p_j^{(i)} \circ \Phi_i^{F_2}$ and the finite entries of $E_i$ are regular
  invertible functions (cf. definition \ref{def-vectorbundle}).
  The equation $E_j(x) \odot M_{ij}^{(1)}(x)=M_{ij}^{(2)}(x) \odot
  E_i(x)$ implies that $$E_j^{-1}(x) \odot M_{ij}^{(2)}(x)=M_{ij}^{(1)}(x) \odot
  E_i^{-1}(x)$$ holds for all $x \in U_i \cap U_j$, where $E_k^{-1}(x) := (E_k(x))^{-1}$
  for all $x \in U_k$. As the finite entries of $E_k^{-1}: U_k \rightarrow G(r)$
  are again regular invertible functions we can also glue the maps
  $$U_i \times \RR^r \rightarrow U_i \times \RR^r: (x,a) \mapsto (x,E_i^{-1}(x) \odot a)$$
  on the local trivializations to obtain the inverse morphism $\Psi': |F_2| \rightarrow
  |F_1|$, which proves that $\Psi$ is an isomorphism.
\end{proof}

The morphisms we have just introduced admit another important
operation, namely the pull-back of a vector bundle:

\begin{definition}[Pull-back of vector bundles]
  Let $\pi:F \rightarrow X$ be a vector bundle of rank $r$ with open covering
  $U_1,\ldots,U_s$ and transition maps $M_{ij}$ as in definition \ref{def-vectorbundle}, and let $f:Y
  \rightarrow X$ be a morphism of tropical cycles. Then the
  \emph{pull-back bundle} $\pi':f^{*}F \rightarrow Y$ is the vector
  bundle we obtain by gluing the patches $f^{-1}(U_1) \times \RR^r,\ldots,f^{-1}(U_s) \times \RR^r$
  along the transition maps $M_{ij} \circ f$. Hence we obtain the
  commutative diagram
  $$\begin{xy}
    \xymatrix{
      f^{*}F \ar@{-->}[r]^{f'} \ar@{-->}[d]_{\pi'} & F \ar[d]^{\pi}\\
      Y \ar[r]_f & X
    }
  \end{xy}$$
  where $f'$ and $\pi'$ are locally given by $f':f^{-1}(U_i) \times \RR^r \rightarrow U_i \times \RR^r:
  (y,a) \mapsto (f(y),a)$ and $\pi':f^{-1}(U_i) \times \RR^r \rightarrow f^{-1}(U_i): (y,a) \mapsto y$.
\end{definition}

To be able to define Chern classes in the second section we need
the notion of a rational section of a vector bundle:

\begin{definition}[Rational sections of vector bundles] \label{def-sectionsofbundles}
  Let $\pi:F \rightarrow X$ be a vector bundle of rank $r$.
  A \emph{rational section} $s:X \rightarrow F$ of $F$ is a continuous map $s:|X| \rightarrow |F|$ such that
  \begin{enumerate}
    \item $\pi(s(x))=x$ for all $x \in |X|$ and
    \item there exist an open covering $U_1,\ldots,U_s$ and homeomorphisms $\Phi_i$
    satisfying definition \ref{def-vectorbundle} (cf. definition \ref{def-identifyequalbundles})
    such that the maps $p_j^{(i)} \circ \Phi_i \circ s: U_i \rightarrow
    \RR$ are rational functions on $U_i$ for all $i,j$,
  \end{enumerate}
  where $p_j^{(i)}: U_i \times \RR^r \rightarrow \RR$ is given by $(x,(a_1,\ldots,a_r)) \mapsto
  a_j$. A rational section $s: X \rightarrow F$ is called \emph{bounded} if the
  above maps $p_j^{(i)} \circ \Phi_i \circ s$ are bounded for all
  $i,j$.
\end{definition}

\begin{remark}
  Let $\pi: L \rightarrow X$ be a line bundle and $s: X
  \rightarrow L$ a rational section. By definition, the map $p^{(i)} \circ \Phi_i
  \circ s$ is a rational function on $U_i$ for all $i$. Moreover,
  on $U_i \cap U_j$ the maps $p^{(i)} \circ \Phi_i \circ s$ and
  $p^{(j)} \circ \Phi_j \circ s$ differ
  by a regular invertible function only. Hence $s$ defines a
  Cartier divisor $\calD(s) \in \Div(X)$.
\end{remark}

There is a useful statement on these Cartier divisors $\calD(s)$
in \cite{T09} that we want to cite here including its proof:

\begin{lemma} \label{lemma-boundedsectionsinlinebundleareequivalent}
  Let $\pi: L \rightarrow X$ be a line bundle and let $s_1, s_2: X
  \rightarrow L$ be two bounded rational sections. Then $\calD(s_1)-\calD(s_2) = h$ for
  some bounded rational function $h \in \mathcal{K}^{*}(X)$, i.e.
  $\calD(s_1)$ and $\calD(s_2)$ are rationally equivalent.
\end{lemma}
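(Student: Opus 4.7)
The plan is to exhibit $h$ explicitly as the ``tropical quotient'' of the two sections, computed in any local trivialization, and to check that it patches together to a global bounded rational function whose associated Cartier divisor is $\calD(s_1)-\calD(s_2)$.

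First I would fix an open covering $U_1,\ldots,U_s$ of $X$ with local trivializations $\Phi_i$ that is admissible for both $s_1$ and $s_2$ simultaneously (using Definition \ref{def-identifyequalbundles} and Remark \ref{remark-refinedcovering}). Since $L$ has rank one, the transition maps $M_{ij}:U_i\cap U_j\to G(1)$ are given by single regular invertible functions $m_{ij}$, with $M_{ij}(x)\odot a = m_{ij}(x)+a$. Setting $f_i := p^{(1)}\circ\Phi_i\circ s_1$ and $g_i := p^{(1)}\circ\Phi_i\circ s_2$, the divisors $\calD(s_1)$ and $\calD(s_2)$ are by definition represented by $\{(U_i,f_i)\}$ and $\{(U_i,g_i)\}$ respectively, and both $f_i,g_i$ are \emph{bounded} rational functions on $U_i$ by the assumption that $s_1,s_2$ are bounded sections.

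Next I would use the cocycle-type identities coming from the transitions: on $U_i\cap U_j$ one has $f_j=m_{ij}+f_i$ and $g_j=m_{ij}+g_i$, because $s_1,s_2$ are the same sections described in a different trivialization. Subtracting, the regular invertible function $m_{ij}$ cancels, giving $f_j-g_j = f_i-g_i$ on $U_i\cap U_j$. Therefore the local data $h_i := f_i-g_i$ glue to a single globally defined function $h$ on $|X|$. Since the difference of two rational functions is rational, $h\in\calK^{*}(X)$; and since each $h_i$ is a difference of two bounded functions, $h$ is bounded.

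Finally, the Cartier divisor $\calD(s_1)-\calD(s_2)$ is, by construction, represented locally by $\{(U_i,f_i-g_i)\} = \{(U_i,h|_{U_i})\}$, which is precisely the principal Cartier divisor of the global rational function $h$. Hence $\calD(s_1)-\calD(s_2)=h$ in $\Div(X)$, with $h$ a bounded rational function, proving the claimed rational equivalence. There is no real obstacle here: the only point requiring care is choosing a trivializing cover that works for both sections at once, so that the same $m_{ij}$ controls the transition of $f_i$ and of $g_i$ and can be cancelled; everything else is formal.
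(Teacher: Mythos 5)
Your proposal is correct and follows essentially the same route as the paper: choose a common trivializing cover for both sections, observe that the local representatives of $s_1$ and $s_2$ transform by the same regular invertible transition functions so their difference glues to a global rational function $h$, and identify $\calD(s_1)-\calD(s_2)$ with the Cartier divisor of $h$. Your explicit remark that $h$ is bounded because each local difference is a difference of bounded functions is a small detail the paper leaves implicit, but otherwise the arguments coincide.
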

\begin{proof}
  Let $U_1,\ldots,U_s$ be an open covering of $X$ with transition maps
  $M_{ij}$ and homeomorphisms $\Phi_i$ according to definition \ref{def-vectorbundle}
  such that for all $i$ both $s_1^{(i)}:=p^{(i)} \circ \Phi_i \circ s_1$ and $s_2^{(i)}:=p^{(i)} \circ \Phi_i \circ s_2$
  are rational functions on $U_i$ (cf. definition \ref{def-sectionsofbundles}).
  We define $h_i := s_1^{(i)}-s_2^{(i)} \in \calK^{*}(U_i).$
  As we have $s_1^{(i)}-s_1^{(j)} = s_2^{(i)}-s_2^{(j)} = M_{ij} \in \calO^{*}(U_i \cap
  U_j)$ for all $i,j$ these maps $h_i$ glue together to $h \in \calK^{*}(X)$. Hence we have
  $$\begin{array}{rcl}
    \calD(s_1)-\calD(s_2) &=& [\{(U_i,s_1^{(i)})\}]-[\{(U_i,s_2^{(i)})\}] \\
    &=& [\{(U_i,s_1^{(i)}-s_2^{(i)})\}]\\
    &=& [\{(U_i,h_i)\}]\\
    &=& [\{(|X|,h)\}].
  \end{array}$$
\end{proof}

\begin{remark} \label{remark-welldefinedcartierdivisorclass}
  Lemma \ref{lemma-boundedsectionsinlinebundleareequivalent}
  implies that we can associate to any line bundle $L$ admitting a bounded
  rational section $s$ a Cartier divisor class $\calD(F):=[\calD(s)]$ that only
  depends on the bundle $L$ and not on the choice of the rational section $s$.
\end{remark}

Combining both the notion of a morphism of vector bundles and the
notion of a rational section we can define the following:

\begin{definition}[Pull-back of rational sections]
  Let $\pi: F \rightarrow X$ be a vector bundle of rank $r$ and $f:Y
  \rightarrow X$ a morphism of tropical varieties. Moreover, let $s:X \rightarrow F$ be a rational section
  of $F$ with open covering $U_1,\ldots,U_s$ and homeomorphisms $\Phi_1,\ldots,\Phi_s$ as in definition
  \ref{def-sectionsofbundles}. Then we can define a rational section
  $f^{*}s:~Y~\rightarrow~f^{*}F$ of $f^{*}F$, the \emph{pull-back section} of $s$, as follows: On
  $f^{-1}(U_i)$ we define $$f^{*}s: f^{-1}(U_i) \rightarrow
  f^{-1}(U_i) \times \RR^r: y \mapsto (y,(p_i \circ \Phi_i \circ s \circ
  f)(y)),$$
  where $p_i:U_i \times \RR^r \rightarrow \RR^r$ is the projection
  on the second factor. Note that for $y \in f^{-1}(U_i) \cap
  f^{-1}(U_j)$ the points $(y,(p_i \circ \Phi_i \circ s \circ
  f)(y))$ and $(y,(p_j \circ \Phi_j \circ s \circ f)(y))$ are
  identified in $f^{*}F$ if and only if $(f(y),(p_i \circ \Phi_i \circ s \circ
  f)(y))$ and $(f(y),(p_j \circ \Phi_j \circ s \circ f)(y))$ are
  identified in $F$. But this is the case as $(f(y),(p_i \circ \Phi_i \circ s \circ
  f)(y)) = (\Phi_i \circ s)(f(y)) \sim (\Phi_j \circ s)(f(y)) =
  (f(y),(p_j \circ \Phi_j \circ s \circ f)(y)).$ Hence we can glue our locally
  defined map $f^{*}s$ to obtain a map $f^{*}s: Y \rightarrow
  f^{*}F$.
\end{definition}

We finish this section with the following statement on vector
bundles on simply connected tropical cycles which will be of use
for us later on:

\begin{theorem} \label{thm-bundleonsimplyconnectedspacesplits}
  Let $\pi:F \rightarrow X$ be a vector bundle of rank $r$ on the
  simply connected tropical cycle $X$. Then $F$ is a direct sum
  of line bundles, i.e. there exist line bundles $L_1,\ldots,L_r$ on
  $X$ such that $F=L_1 \oplus \ldots \oplus  L_r$.
\end{theorem}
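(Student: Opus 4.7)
The plan is to exploit the fact that every element $M\in G(r)$ has the canonical form $M = A_\sigma \odot D(a_1,\ldots,a_r)$, so a transition map $M_{ij}\colon U_i\cap U_j\to G(r)$ decomposes into a \emph{permutation part} $\sigma_{ij}$ and a \emph{diagonal part} $D_{ij}$. Since the entries of $M_{ij}$ are either everywhere-finite (regular invertible) or constantly $-\infty$, the permutation $\sigma_{ij}$ is locally constant on $U_i\cap U_j$. After refining the open covering (using remark \ref{remark-refinedcovering}), I may assume each $U_i$ and each intersection $U_i\cap U_j$ is connected, so that each $\sigma_{ij}\in S_r$ is a single element.

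The cocycle condition $M_{ij}=M_{kj}\odot M_{ik}$ together with the matrix identities $A_\sigma\odot A_\tau=A_{\tau\circ\sigma}$ and $D\odot A_\sigma=A_\sigma\odot D'$ (with $D'$ a permuted diagonal) yields a plain cocycle relation among the permutations, of the form $\sigma_{ij}=\sigma_{ik}\circ\sigma_{kj}$. The goal is then to show this $S_r$-valued cocycle is a coboundary, i.e.\ that there exist $\tau_i\in S_r$ with $\sigma_{ij}=\tau_i\circ\tau_j^{-1}$. Once such $\tau_i$ are found, setting $E_i:=A_{\tau_i}\colon U_i\to G(r)$ and applying lemma \ref{lemma-isomorphicbundles} gives an isomorphic vector bundle with new transition maps $M_{ij}'=A_{\tau_j}\odot M_{ij}\odot A_{\tau_i^{-1}}$, whose permutation part is the identity; thus every $M_{ij}'$ is a tropical diagonal matrix $D(m_{ij}^{(1)}(x),\ldots,m_{ij}^{(r)}(x))$. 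The cocycle condition then splits coordinate-wise, so for each $k=1,\ldots,r$ the functions $m_{ij}^{(k)}$ define a line bundle $L_k$, and by the construction of the direct sum bundle we get $F\cong L_1\oplus\cdots\oplus L_r$.

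The main obstacle, and the only place where simple-connectedness enters, is producing the trivializing $\tau_i$. The cleanest way is to build the associated principal $S_r$-bundle (equivalently, the $r!$-sheeted covering space) $P\to X$ by gluing $\coprod_i U_i\times S_r$ via $(x,\tau)_i\sim(x,\tau\circ\sigma_{ij})_j$. Because $S_r$ is discrete, $P\to X$ is a topological covering map. A simply connected base admits no nontrivial connected coverings, so each connected component of $P$ maps homeomorphically onto $X$. Picking any such component yields a continuous global section of $P$; restricting this section to each connected $U_i$ gives a locally constant, hence constant, element $\tau_i\in S_r$, and the gluing relation of $P$ translates precisely into $\sigma_{ij}=\tau_i\circ\tau_j^{-1}$.

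A minor technical point to check along the way is that passing to a refinement of the covering in which all $U_i$ and pairwise intersections are connected is genuinely possible for tropical cycles (this is the usual "good cover" argument for a polyhedral space, and can be done by choosing sufficiently small open neighborhoods of the polyhedra together with their combinatorial stars). Everything else is formal manipulation of the canonical form on $G(r)$ and the direct-sum construction, so the proof reduces to this one topological input about coverings of a simply connected space.
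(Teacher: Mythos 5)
Your argument is correct in substance and starts from the same observation as the paper --- every transition map factors into a diagonal part and a (locally) constant permutation part, and the only obstruction to splitting is the $S_r$-valued monodromy --- but it kills that monodromy by a genuinely different mechanism. The paper does it by hand: for a closed path $\gamma$ based in $U_1$ it defines $I_\gamma$ by composing the permutation parts along a chain of connected sets covering $\gamma$, then verifies explicitly that $I_\gamma$ is independent of the chosen chain and unchanged under deformation of $\gamma$, so simple connectedness forces $I_\gamma=1$; this lets one track the first coordinate consistently, producing a line subbundle (whence decomposability via remark \ref{remark-subbundles}, and the full splitting by implicit induction). You instead assemble the permutation cocycle into the associated $r!$-sheeted covering $P\rightarrow X$ and quote the classification of coverings of a simply connected, locally path-connected (here polyhedral, hence locally contractible) base to trivialize it, obtaining constants $\tau_i$ with $\sigma_{ij}=\tau_i\circ\tau_j^{-1}$, and then apply lemma \ref{lemma-isomorphicbundles} with $E_i=A_{\tau_i}$ to make all transition maps diagonal and read off the summands. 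What your route buys is a cleaner and in fact more complete endgame (the paper stops at decomposability; you exhibit the diagonal model explicitly), at the cost of importing covering space theory, which the paper's path-and-homotopy bookkeeping essentially re-proves in the one special case needed. Two points to tighten: first, a \emph{finite} refinement in which every $U_i$ and every $U_i\cap U_j$ is connected need not exist on a non-compact cycle (already a finite cover of $\RR$ by infinite unions of intervals admits no finite connected refinement); but you do not really need it, since by definition \ref{def-vectorbundle}(c) each entry of $M_{ij}$ is finite on all of $U_i\cap U_j$ or identically $-\infty$, so $\sigma_{ij}$ is automatically constant there, and the section of $P$ gives locally constant $\tau_i$, which suffices either by passing to connected components (harmless in the infinite setting of definition \ref{def-infinitevectorbundles}, where the theorem is actually applied in theorem \ref{thm-chernclasswelldefined}) or by noting that the proof of lemma \ref{lemma-isomorphicbundles} works verbatim for such $E_i$. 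Second, your construction yields an isomorphism $F\cong L_1\oplus\cdots\oplus L_r$ rather than the literal equality in the statement; to match it, transport the coordinate subbundles of the diagonal model back through this isomorphism, i.e.\ use the $\tau_i$ to single out the distinguished coordinate directions inside each trivialization of $F$ itself --- the same step the paper leaves implicit.
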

\begin{proof}
  We show that every vector bundle of rank $r \geq 2$
  on $X$ is decomposable. Let $U_1, \ldots, U_s$ be an open covering of
  $X$ and let $$M_{ij}(x)=D(\varphi_{i,j}^{(1)},\ldots,\varphi_{i,j}^{(r)})(x) \odot
  A_{\sigma_{ij}}(x) =: D_{ij}(x) \odot A_{\sigma_{ij}}(x), \enspace x \in U_i \cap U_j$$
  with $\varphi_{i,j}^{(1)},\ldots,\varphi_{i,j}^{(r)} \in \mathcal{O}^{*}(U_i \cap U_j)$
  and $\sigma_{ij}(x) \in S_r$ be transition functions according to definition \ref{def-vectorbundle}.
  We only have to show that it is possible to track the first coordinate of the $\RR^r$-factor in $U_1
  \times \RR^r$ consistently along the transition maps: Let
  $\gamma: [0,1] \rightarrow |X|$ be a closed path starting and
  ending in $P \in U_1$. Decomposing $\gamma$ into several paths if necessary, we
  may assume that $\gamma$ has no self-intersections, i.e. that $\gamma|_{[0,1)}$ is injective.
  As $\gamma([0,1])$ is compact we can
  choose an open covering $V_1,\ldots,V_t$ of $\gamma([0,1])$ such
  that for all $j$ we have $V_j \subseteq U_i$ for some index $i=i(j)$, $P \in V_1=V_t \subseteq U_1$,
  all sets $V_j$ and all intersections $V_j \cap V_{j+1}$ are connected and
  all intersections $V_j \cap V_{j'}$ for non-consecutive indices are
  empty. For sets $V_j$ and $V_{j'}$ with non-empty intersection we have restricted
  transition maps $\widetilde{M}_{V_j,V_{j'}}(x)=\widetilde{D}_{V_j,V_{j'}}(x) \odot
  A_{\sigma_{V_j,V_{j'}}}$ induced by the transition maps between $U_{i(j)} \supseteq V_j$
  and $U_{i(j')} \supseteq V_{j'}$. Note that the permutation parts $A_{\sigma_{V_j,V_{j'}}}$ of
  the transition maps do not depend on $x$ as all intersections $V_j \cap V_{j'}$ are connected
  and the permutations have to be locally constant. We define
  $I_\gamma := \sigma_{V_{t-1},V_t} \circ \ldots \circ \sigma_{V_1,V_2}(1)$.
  We have to check that $I_\gamma=1$ holds.
  First we show that $I_\gamma$ does not depend on the choice of
  the covering $V_1,\ldots,V_t$. Hence, let
  $V_1',\ldots,V_{t'}'$ be another covering as above. We may assume that all
  intersections $V_j \cap V_{j'}'$ are connected, too. Between any two sets $A,B \in \{V_1,\ldots,V_t,V_{1}',\ldots,V_{t'}'\}$
  with non-empty intersection we have restricted transition maps $\widetilde{M}_{A,B}(x)=
  \widetilde{D}_{A,B}(x) \odot A_{\sigma_{A,B}}$ as above. Moreover, let
  $0=\alpha_0<\alpha_1<\ldots<\alpha_p=1$ be a decomposition of
  $[0,1]$ such that for all $i$ we have $\gamma([\alpha_i,\alpha_{i+1}]) \subseteq V_j
  \cap V_{j'}'$ for some indices $j,j'$. Let $i_0$ be the maximal
  index such that $\gamma([\alpha_{i_0},\alpha_{{i_0}+1}])
  \subseteq V_a \cap V_b'$ and $$\sigma_{V_{a-1},V_a} \circ \ldots \circ \sigma_{V_1,V_2}
  = \sigma_{V_b',V_a} \circ \sigma_{V_{b-1}',V_b'} \circ \ldots \circ
  \sigma_{V_1',V_2'}$$
  is still fulfilled. Assume that $i_0<p-1$. Let $\gamma([\alpha_{{i_0}+1},\alpha_{{i_0}+2}])
  \subseteq V_{a'} \cap V_{b'}'$. Hence $\gamma(\alpha_{{i_0}+1})
  \in V_{a} \cap V_{b}' \cap V_{a'} \cap V_{b'}'$ and we can
  conclude using the cocycle condition:
  $$\begin{array}{rcl}
    \sigma_{V_{a},V_{a'}} \circ \sigma_{V_{a-1},V_a} \circ \ldots \circ \sigma_{V_1,V_2} &=&
    \sigma_{V_{a},V_{a'}} \circ \sigma_{V_b',V_a} \circ \sigma_{V_{b-1}',V_b'} \circ \ldots \circ \sigma_{V_1',V_2'}\\
    &=& \sigma_{V_{a},V_{a'}} \circ \sigma_{V_{b'}',V_{a}} \circ \sigma_{V_b',V_{b'}'} \circ \sigma_{V_{b-1}',V_b'} \circ \ldots \circ \sigma_{V_1',V_2'}\\
    &=& \sigma_{V_{b'}',V_{a'}} \circ \sigma_{V_b',V_{b'}'} \circ \sigma_{V_{b-1}',V_b'} \circ \ldots \circ \sigma_{V_1',V_2'},
  \end{array}$$
  a contradiction to our assumption. Hence $i_0=p-1$ and we can conclude that $I_\gamma$ is independent of the chosen
  covering.\\
  If $\gamma$ and $\gamma'$ are paths that pass through
  exactly the same open sets $U_i$ in the same order, then we can conclude
  that $I_\gamma=I_{\gamma'}$ holds as exactly the same transition
  functions are involved. Hence, a continuous deformation of $\gamma$ does not change
  $I_\gamma$. As $X$ is simply connected we can contract $\gamma$ to
  a point. This implies $I_\gamma = I_{\gamma_0}$, where $\gamma_0$ is
  the constant path $\gamma_0(t)=P$ for all $t$. Thus
  $I_\gamma=I_{\gamma_0}=1$. This proves the claim.
\end{proof}

There is a related theorem in \cite{T09} which we want to state
here. As we will not need the result in this work, we will omit the
proof and refer to \cite{T09} instead.

\begin{theorem} \label{thm-bundleonsimplyconnectedspacetrivial}
  Let $\pi:L \rightarrow X$ be a line bundle on the
  simply connected tropical cycle $X$. Then $L$ is trivial, i.e.
  $L \cong X \times \RR$ as a vector bundle.
\end{theorem}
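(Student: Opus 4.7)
The plan is to recast triviality as the splitting of a Čech $1$-cocycle and then produce the splitting by an inductive extension argument driven by simple connectedness, mirroring the proof of Theorem~\ref{thm-bundleonsimplyconnectedspacesplits}. Concretely, by Lemma~\ref{lemma-isomorphicbundles} applied with $F_2=X\times\RR$ and $M^{(2)}_{ij}=0$, showing $L\cong X\times\RR$ amounts to choosing an open cover $\{U_i\}$ with transition cocycle $\varphi_{ij}\in\calO^*(U_i\cap U_j)$ (identifying $G(1)$ with $(\RR,+)$) and finding regular invertible functions $e_i\in\calO^*(U_i)$ satisfying $\varphi_{ij}=e_i-e_j$ on every double intersection.

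As preparation I would refine the cover so that each $U_i$, every $U_i\cap U_j$ and every triple intersection is connected and simply connected; then each $\varphi_{ij}$ is a single globally integer affine linear function on its domain, and the cocycle relation $\varphi_{ik}=\varphi_{ij}+\varphi_{jk}$ is a genuine equality of functions on triple overlaps. The key technical tool, already used in the construction of $F$ from cocycle data earlier in the paper, is that any element of $\calO^*$ on a connected open subset of a simply connected $U_i$ extends uniquely by analytic continuation to an element of $\calO^*(U_i)$. Setting $e_1\equiv 0$ on $U_1$ and propagating outward, I define $e_j$ on any $U_j$ meeting $U_1$ by first setting $e_j:=-\varphi_{1j}$ on $U_1\cap U_j$ and then extending uniquely to $U_j$, and continue inductively along chains of cover elements covering a path in $X$.

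The main obstacle is well-definedness: the value of $e_i$ must not depend on the chain of opens along which it is propagated, equivalently, the scalar ``holonomy'' picked up around any closed loop in the nerve of the cover must vanish. This is the scalar analogue of the problem handled in the proof of Theorem~\ref{thm-bundleonsimplyconnectedspacesplits}, and I would settle it in the same way: invariance of the accumulated scalar under local modifications of the chain follows from the cocycle identity $\varphi_{ik}=\varphi_{ij}+\varphi_{jk}$ applied triangle by triangle on triple overlaps, which reduces the question to the homotopy class of the underlying loop in $X$; since $X$ is simply connected every loop contracts to a point, at which the accumulated scalar is trivially zero. Once the $e_i$ are constructed, the identity $\varphi_{ij}=e_i-e_j$ holds on $U_i\cap U_j$ by construction (extending a chain ending in $U_i$ by one step to $U_j$ adds exactly $-\varphi_{ij}$), and Lemma~\ref{lemma-isomorphicbundles} delivers the isomorphism $L\cong X\times\RR$.
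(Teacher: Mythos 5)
The paper itself gives no proof of this theorem: it states it and defers entirely to \cite{T09}, so there is no in-paper argument to compare yours against. Your reduction via Lemma \ref{lemma-isomorphicbundles} to the \v{C}ech coboundary equation $\varphi_{ij}=e_i-e_j$ (with $G(1)$ identified with $(\RR,+)$) is correct, and modelling the rest on the monodromy argument of Theorem \ref{thm-bundleonsimplyconnectedspacesplits} is the natural strategy. But there is a genuine gap in the step you lean on most heavily: the claim that a regular invertible function on a connected open subset of a simply connected $U_i$ ``extends uniquely by analytic continuation'' to $U_i$. This fails on any tropical cycle with branching. At a point whose star is not contained in a single affine subspace --- e.g.\ the vertex of the standard tropical line in $\RR^2$ with rays in directions $(-1,0)$, $(0,-1)$, $(1,1)$ --- a regular invertible function is locally the restriction of a single ambient integer affine function, and two integer covectors can agree on the direction of one ray while differing on the others: the zero function and the restriction of $x\mapsto x_2$ coincide on an open piece of the first ray yet differ on the star. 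The paper only ever invokes unique continuation where it is valid: on convex polyhedra $\sigma$ (in the construction of $F$ from cocycle data) and on the curve $X_2$, all of whose vertices are $2$-valent.

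This failure undermines your argument at both of its load-bearing points. First, ``set $e_j:=-\varphi_{1j}$ on $U_1\cap U_j$ and extend uniquely to $U_j$'' does not define $e_j$; a choice must be made, and nothing guarantees a choice exists that is simultaneously compatible with all the other overlaps $U_j\cap U_k$ --- that simultaneous compatibility is essentially the statement being proved. Second, the holonomy argument: in the proof of Theorem \ref{thm-bundleonsimplyconnectedspacesplits} the quantity transported along a chain is a permutation, which is locally constant, so the holonomy depends only on the sequence of opens and hence only on the homotopy class of the loop; your scalar analogue transports actual affine functions, the cocycle identity kills the discrepancy only on the triple overlap itself, and carrying that identity back to the basepoint again requires the unique continuation you do not have. (For the same reason, two regular invertible functions on a connected $U_i\cap U_j$ agreeing on an open subset need not agree everywhere, so even the final verification of $\varphi_{ij}=e_i-e_j$ on all of $U_i\cap U_j$ is not ``by construction''.) Your argument does go through verbatim for the translation parts of the $\varphi_{ij}$, which form a cocycle in the locally constant sheaf $\RR$; what is missing is a separate treatment of the integer linear parts at branch points, which is presumably where the actual work in \cite{T09} lies.
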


Combing both theorem \ref{thm-bundleonsimplyconnectedspacesplits}
and theorem \ref{thm-bundleonsimplyconnectedspacetrivial} we can
conclude the following:

\begin{corollary}
  Let $\pi:F \rightarrow X$ be a vector bundle of rank $r$ on the
  simply connected tropical cycle $X$. Then $F$ is trivial, i.e.
  $F \cong X \times \RR^r$ as a vector bundle.
\end{corollary}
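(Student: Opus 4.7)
The proof is essentially immediate from combining the two preceding theorems, so my plan is to just carefully verify that the pieces compose properly.

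First, I would apply Theorem \ref{thm-bundleonsimplyconnectedspacesplits} to the simply connected cycle $X$ to obtain line bundles $L_1, \ldots, L_r$ on $X$ with $F \cong L_1 \oplus \cdots \oplus L_r$. Then I would apply Theorem \ref{thm-bundleonsimplyconnectedspacetrivial} to each summand $L_k$ individually: since $X$ is still simply connected, every $L_k$ is isomorphic to the trivial line bundle $X \times \RR$.

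The only genuine content left is the observation that direct sums respect isomorphisms and that a direct sum of trivial line bundles is the trivial rank-$r$ bundle. For the first point, I would use Lemma \ref{lemma-isomorphicbundles}: if each $L_k$ is isomorphic to $X \times \RR$ via families of invertible matrices $E_i^{(k)}: U_i \to G(1)$ satisfying $E_j^{(k)} \odot M_{ij}^{(k)} = N_{ij}^{(k)} \odot E_i^{(k)}$ (with $N_{ij}^{(k)}$ the trivial transition map of $X \times \RR$), then on a common refined cover the block-diagonal matrices $\tilde E_i := \mathrm{diag}(E_i^{(1)}, \ldots, E_i^{(r)}) \in G(r)$ satisfy the analogous intertwining relation for the block-diagonal transition maps of $L_1 \oplus \cdots \oplus L_r$ and of $(X \times \RR) \oplus \cdots \oplus (X \times \RR)$. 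By Lemma \ref{lemma-isomorphicbundles} this yields the desired bundle isomorphism.

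For the second point, the direct sum $(X \times \RR) \oplus \cdots \oplus (X \times \RR)$ is constructed from the trivial cover $\{X\}$ with transition map $E \in G(r)$, which is manifestly the same gluing data as the trivial bundle $X \times \RR^r$. Chaining these isomorphisms gives $F \cong X \times \RR^r$, as required. I do not anticipate any real obstacle, since both ingredients are already in place and everything else is bookkeeping on a common open refinement of the various trivializing covers (using Remark \ref{remark-refinedcovering}).
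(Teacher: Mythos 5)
Your proposal is correct and follows exactly the route the paper intends: the corollary is stated there as an immediate consequence of Theorem \ref{thm-bundleonsimplyconnectedspacesplits} and Theorem \ref{thm-bundleonsimplyconnectedspacetrivial}, with no further proof given. Your additional bookkeeping (block-diagonal matrices via Lemma \ref{lemma-isomorphicbundles} on a common refinement, and the identification of a direct sum of trivial line bundles with $X \times \RR^r$) is a sound way to fill in the details the paper leaves implicit.
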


\section{Chern classes} \label{sec-chernclasses}

In this section we will introduce Chern classes of tropical vector
bundles and prove basic properties. To be able to do this we need
some preparation:

\begin{definition} \label{def-intersectwithsection}
  Let $\pi: F \rightarrow X$ be a vector bundle of rank $r$ and let $s: X \rightarrow F$ be a rational section
  with open covering $U_1,\ldots,U_s$ as in definition \ref{def-sectionsofbundles}. We fix a natural
  number $1 \leq k \leq r$ and a subcycle $Y \in Z_l(X)$.
  By definition, $s_{ij}:=p_j^{(i)} \circ \Phi_i \circ s: U_i \rightarrow
  \RR$ is a rational function on $U_i$ for all $i,j$. Hence, for all $i$ we can
  take local intersection products
  $$(s^{(k)} \cdot Y) \cap U_i  := \sum_{1\leq j_1<\ldots<j_k \leq r} s_{ij_1} \cdots s_{ij_k} \cdot (Y \cap U_i).$$
  Since $s_{i'j} = s_{i \sigma(j)}+\varphi_j$ on $U_i \cap U_{i'}$ for some $\sigma \in S_r$ and some regular
  invertible map $\varphi_j \in \mathcal{O}^{*}(U_i \cap U_{i'})$,
  the intersection products $(s^{(k)} \cdot Y) \cap U_i$ and $(s^{(k)} \cdot Y) \cap U_{i'}$ coincide on $U_i
  \cap U_{i'}$ and we can glue them to obtain a global intersection
  cycle $s^{(k)} \cdot Y \in Z_{l-k}(X)$.
\end{definition}

\begin{lemma} \label{lemma-rationalequivalence}
  Let $\pi: F \rightarrow X$ be a vector bundle of rank $r$, fix $k \in \{1,\ldots,r\}$
  and let $s: X \rightarrow F$ be a rational section. Moreover, let $Y \in
  Z_l(X)$ be a cycle and let $\varphi \in \mathcal{K}^{*}(Y)$ be a
  bounded rational function on $Y$. Then the following equation holds:
  $$s^{(k)} \cdot (\varphi \cdot Y) = \varphi \cdot (s^{(k)} \cdot Y).$$
\end{lemma}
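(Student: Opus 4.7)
The plan is to reduce the claim to a local statement on each chart $U_i$ and then invoke the commutativity of intersection products of Cartier divisors. By definition, $\varphi \cdot Y$ is a subcycle of $Y$ (and hence of $X$), so the left hand side $s^{(k)} \cdot (\varphi \cdot Y)$ is defined via Definition \ref{def-intersectwithsection} applied to the cycle $\varphi \cdot Y$. Both sides are global cycles in $Z_{l-k-1}(X)$ that are determined by their restrictions to the open cover $\{U_i\}$, so it suffices to check that the two sides agree on each $U_i$.

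On $U_i$, unwinding Definition \ref{def-intersectwithsection} gives
\[
(s^{(k)} \cdot (\varphi \cdot Y)) \cap U_i = \sum_{1 \leq j_1 < \ldots < j_k \leq r} s_{ij_1} \cdots s_{ij_k} \cdot \bigl(\varphi|_{U_i} \cdot (Y \cap U_i)\bigr),
\]
while on the other side
\[
(\varphi \cdot (s^{(k)} \cdot Y)) \cap U_i = \varphi|_{U_i} \cdot \sum_{1 \leq j_1 < \ldots < j_k \leq r} s_{ij_1} \cdots s_{ij_k} \cdot (Y \cap U_i).
\]
Thus the claim reduces to the identity
\[
s_{ij_1} \cdots s_{ij_k} \cdot \bigl(\varphi|_{U_i} \cdot (Y \cap U_i)\bigr) = \varphi|_{U_i} \cdot \bigl( s_{ij_1} \cdots s_{ij_k} \cdot (Y \cap U_i)\bigr)
\]
for each fixed multi-index $j_1 < \ldots < j_k$, which is a statement about commuting $k+1$ rational functions in an intersection product on the cycle $Y \cap U_i$.

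This last identity is precisely the commutativity of intersections of Cartier divisors proved in \cite{AR07} (applied $k$ times to move $\varphi$ past the factors $s_{ij_1},\ldots,s_{ij_k}$ one at a time). The hypothesis that $\varphi$ is \emph{bounded} is needed exactly so that $\varphi|_{U_i}$ stays in $\calK^{*}(Y \cap U_i)$ when the cover is refined, ensuring that each intermediate intersection product is well-defined and independent of the chosen representatives.

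The only point requiring minor care is that the local pieces must glue correctly on intersections $U_i \cap U_{i'}$; but this is exactly the argument already carried out in Definition \ref{def-intersectwithsection}, where the differences $s_{i'j} - s_{i\sigma(j)} \in \calO^*(U_i \cap U_{i'})$ make all the local products $s_{ij_1}\cdots s_{ij_k} \cdot -$ agree on overlaps, and this property is insensitive to whether we further intersect with $\varphi$. Hence no real obstacle arises; the proof is essentially a bookkeeping argument on top of the commutativity theorem for Cartier divisors, which is the only non-formal input.
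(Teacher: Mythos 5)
Your proof is correct and is essentially the paper's argument: the paper's own proof just says the claim ``follows immediately from the definition'' of $s^{(k)} \cdot Y$, and your local unwinding over the cover $\{U_i\}$ plus the commutativity of Cartier-divisor intersections from \cite{AR07} is exactly what that appeal amounts to. One small aside: the boundedness of $\varphi$ is not actually what makes the restrictions or the intermediate products well-defined (any rational function restricts to open subsets); it is carried in the hypothesis only because the lemma is later applied to bounded functions to get well-definedness of $c_k(F)$ on $A_{*}(X)$.
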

\begin{proof}
  The claim follows immediately from the definition of the product
  $s^{(k)} \cdot Y$.
\end{proof}

\begin{lemma} \label{lemma-equivalentsectionsinsiobundles}
  Let $\pi: F \rightarrow X$ and $\pi': F' \rightarrow X$ be two
  isomorphic vector bundles of rank $r$ with isomorphism $f:F \rightarrow F'$.
  Moreover, fix $k \in \{1,\ldots,r\}$, let $s: X \rightarrow F$ be a rational section and let $Y \in
  Z_l(X)$ be a cycle. Then the following equation holds: $$s^{(k)} \cdot Y = (f \circ s)^{(k)} \cdot
  Y \in Z_{l-k}(X).$$
\end{lemma}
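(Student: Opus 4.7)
My plan is to reduce the equality to a purely local computation on each $U_i$, using the local description of the isomorphism $f$ given by Lemma \ref{lemma-isomorphicbundles}. By that lemma we may pick a common open covering $U_1,\ldots,U_s$ of $X$ satisfying Definition \ref{def-vectorbundle} for both $F$ and $F'$, together with maps $E_i: U_i \rightarrow G(r)$ whose entries are regular invertible or $-\infty$, and such that
$$\Phi_i^{F'} \circ f \circ (\Phi_i^F)^{-1}:(x,a) \mapsto (x, E_i(x) \odot a).$$
Writing $s_{ij} = p_j^{(i)} \circ \Phi_i^F \circ s$ and $t_{ij} = p_j^{(i)} \circ \Phi_i^{F'} \circ (f \circ s)$, the plan is to express $t_{ij}$ explicitly in terms of the $s_{ik}$ and the entries of $E_i$, and then check that the elementary symmetric expressions used in Definition \ref{def-intersectwithsection} coincide up to terms killed by intersection.

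For the explicit formula, I would use the fact that every element of $G(r)$ has a unique presentation $A_\sigma \odot D(a_1,\ldots,a_r)$. Since the entries of $E_i$ are either regular invertible or constantly $-\infty$, the pattern of finite entries (hence the permutation) is locally constant on $U_i$; after refining the covering to connected pieces we may therefore assume $E_i = A_{\sigma_i} \odot D(a_{i,1},\ldots,a_{i,r})$ with a single permutation $\sigma_i \in S_r$ and regular invertible functions $a_{i,1},\ldots,a_{i,r} \in \mathcal{O}^*(U_i)$. A direct computation of the tropical matrix-vector product then gives
$$t_{ij} \;=\; a_{i,\sigma_i(j)} + s_{i,\sigma_i(j)} \qquad \text{on } U_i.$$

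The final step is to expand the local intersection product and eliminate the $a_{i,k}$ terms. By bilinearity of Cartier divisor intersection (and the fact that a regular invertible function gives the trivial Cartier divisor, so $a_{i,k} \cdot Z = 0$ for any cycle $Z$), we obtain
$$t_{ij_1} \cdots t_{ij_k} \cdot (Y \cap U_i) \;=\; s_{i,\sigma_i(j_1)} \cdots s_{i,\sigma_i(j_k)} \cdot (Y \cap U_i),$$
since every term in the multilinear expansion that involves at least one factor $a_{i,\sigma_i(j_l)}$ vanishes. Summing over all $j_1 < \cdots < j_k$ and using that $\sigma_i$ induces a bijection on $k$-element subsets of $\{1,\ldots,r\}$, the sum on the right rewrites as the same sum of products but indexed by strictly increasing $(j_1',\ldots,j_k')$, which by Definition \ref{def-intersectwithsection} is exactly $s^{(k)} \cdot Y \cap U_i$. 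Thus $(f \circ s)^{(k)} \cdot Y$ and $s^{(k)} \cdot Y$ agree on each $U_i$ and hence globally.

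The main obstacle I expect is not conceptual but bookkeeping: keeping the two indices (chart $i$ and coordinate $j$) straight through the permutation, and justifying the step "regular invertible functions act trivially on cycles". The latter is essentially the content already used in the proof of Lemma \ref{lemma-boundedsectionsinlinebundleareequivalent} (where $\mathcal{O}^*$ functions represent the trivial class of Cartier divisors), and combined with the bilinearity of the AR07 intersection product it yields the vanishing of all cross-terms. Once this is invoked, the computation collapses to the desired identity.
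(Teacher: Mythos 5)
Your proposal is correct and follows essentially the same route as the paper: invoke Lemma \ref{lemma-isomorphicbundles} to write $f$ locally as $(x,a)\mapsto(x,E_i(x)\odot a)$ with $E_i$ a permutation composed with a diagonal of regular invertible functions, deduce $(f\circ s)_{ij}=s_{i\sigma(j)}+\varphi_j$, and conclude that the symmetric local intersection products agree. The only difference is that you spell out the steps the paper leaves implicit (locally constant permutation part, vanishing of cross-terms because $\mathcal{O}^*$-functions give trivial divisors, reindexing of $k$-subsets by $\sigma$), and you use the decomposition $A_\sigma\odot D$ instead of the paper's $D\odot A_\sigma$, which is immaterial.
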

\begin{proof}
  Let $U_1,\ldots,U_s$ be an open covering of $X$ satisfying definition
  \ref{def-vectorbundle} for both $F$ and $F'$ and let $s_{ij}:=p_j^{(i)} \circ \Phi_i \circ s: U_i \rightarrow
  \RR$ and $(f \circ s)_{ij}:=p_j^{(i)} \circ \Phi_i \circ f \circ s: U_i \rightarrow
  \RR$ as in definition \ref{def-intersectwithsection}. By lemma \ref{lemma-isomorphicbundles} the
  isomorphism $f$ can be described on $U_i \times \RR^r$ by $(x,a)
  \mapsto (x,E_i(x) \odot a)$ with $E_i(x)=D(\varphi_1,\ldots,\varphi_r) \odot A_\sigma$ for some
  regular invertible functions $\varphi_1,\ldots,\varphi_r \in \mathcal{O}^{*}(U_i)$ and a permutation
  $\sigma \in S_r$. Hence $(f \circ s)_{ij} =
  s_{i\sigma(j)}+\varphi_j$ on $U_i$ and thus $$\sum_{1\leq j_1<\ldots<j_k \leq r} s_{ij_1} \cdots s_{ij_k} \cdot (Y \cap U_i)
  = \sum_{1\leq j_1<\ldots<j_k \leq r} (f \circ s)_{ij_1} \cdots (f \circ s)_{ij_k} \cdot (Y \cap
  U_i),$$ which proves the claim.
\end{proof}

To be able to prove the next theorem, which will be essential for
defining Chern classes, we first need some generalizations of our
previous definitions:

\begin{definition}[Infinite tropical cycle] \label{def-infinitecycle}
  We define an \emph{infinite tropical polyhedral complex} to be a
  tropical polyhedral complex according to definition
  \cite[definition 5.4]{AR07} but we do not require the set of
  polyhedra $X$ to be finite. In particular, all open fans
  $F_\sigma$ have still to be open tropical fans according to
  \cite[definition 5.3]{AR07}. Then an \emph{infinite tropical
  cycle} is an infinite tropical polyhedral complex modulo
  refinements analogous to \cite[definition 5.12]{AR07}.
\end{definition}

\begin{definition}[Infinite rational functions and infinite Cartier divisors] \label{def-infiniteCartierdivisors}
  Let $C$ be an infinite \linebreak tropical cycle and let $U$ be an open set
  in $|C|$. As in \cite[definition 6.1]{AR07}
  an \emph{infinite rational} \linebreak \emph{function} on $U$ is a continuous function
  $\varphi : U \rightarrow \RR$ such that there exists a representative \linebreak
  $(((X, |X|, \{m_\sigma\}_{\sigma \in X}),\omega_X), \{M_\sigma\}_{\sigma \in
  X})$ of $C$, which may now be an infinite tropical polyhedral complex,
  such that for each face $\sigma \in X$ the map $\varphi \circ m_\sigma^{-1}$ is
  locally integer affine linear (where defined). Analogously it is possible to
  define \emph{infinite regular invertible functions} on $U$.\\
  A \emph{representative of an infinite Cartier divisor} on $C$ is then a
  set $\{(U_i,\varphi_i)| \; i \in I\}$, where $\{U_i\}$ is an open covering of $|C|$
  and $\varphi_i$ is an infinite rational function on $U_i$. An \emph{infinite Cartier
  divisor} on $C$ is then a representative of an infinite Cartier divisor
  modulo the equivalence relation given in \cite[definition 6.1]{AR07}.
\end{definition}

\begin{remark}
  Using these basic definitions it is possible to generalize many
  other concepts to the infinite case. In particular, as our
  infinite objects are locally finite, it is possible to perform
  intersection theory as before.
\end{remark}

\begin{definition}[Tropical vector bundles on infinite cycles] \label{def-infinitevectorbundles}
  Let $X$ be an infinite tropical cycle. A \emph{tropical vector bundle} over
  $X$ of rank $r$ is an infinite tropical cycle $F$ together with a morphism $\pi: F
  \rightarrow X$ such that properties (a)--(d) given in definition
  \ref{def-vectorbundle} are fulfilled with the difference that the open covering
  $\{U_i\}$ of $X$ may now be infinite.
\end{definition}

Now we are ready to prove the announced theorem:

\begin{theorem} \label{thm-chernclasswelldefined}
  Let $\pi: F \rightarrow X$ be a vector bundle of rank $r$ and
  $s_1, s_2: X \rightarrow F$ two bounded rational sections. Then
  $s_1^{(k)} \cdot Y$ and $s_2^{(k)} \cdot Y$ are rationally equivalent, i.e.
  $$[s_1^{(k)} \cdot Y] = [s_2^{(k)} \cdot Y] \in A_{*}(X)$$ holds for all subcycles $Y \in Z_l(X)$.
\end{theorem}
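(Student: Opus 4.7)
The plan is to use the framework of infinite tropical cycles just introduced to build an explicit ``tropical homotopy'' between $s_1$ and $s_2$, parametrized by $\RR$, which will realize $s_1^{(k)}\cdot Y-s_2^{(k)}\cdot Y$ as the pushforward of a bounded rational function times a cycle (hence rationally equivalent to zero in $A_*(X)$).

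Let $p_1: X\times\RR\to X$ be the projection and consider the pull-back bundle $p_1^{*}F$, an infinite vector bundle in the sense of Definition \ref{def-infinitevectorbundles}. Fix a common open covering $\{U_i\}$ for $F$ on which both sections satisfy Definition \ref{def-sectionsofbundles}, write $s_{\nu,ij}:=p_j^{(i)}\circ\Phi_i\circ s_\nu$ for $\nu=1,2$, and on each local trivialization $(U_i\times\RR)\times\RR^r$ of $p_1^{*}F$ define
$$\tilde s_{ij}(x,t):=\max\bigl(s_{1,ij}(x),\,s_{2,ij}(x)+t\bigr)-\max(0,t).$$
Each $\tilde s_{ij}$ is a rational function and is globally bounded because the correction $\max(0,t)$ exactly absorbs the linear growth of $s_{2,ij}+t$ in $t$. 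A direct computation, using that $s_1$ and $s_2$ transform identically under the transition maps of $F$ and that $\max(0,t)$ is independent of $x$, yields $\tilde s_{i'j}=\tilde s_{i,\sigma(j)}+\varphi_j$ on $U_i\cap U_{i'}$; hence the $\tilde s_{ij}$ glue to a bounded rational section $\tilde s$ of $p_1^{*}F$. Choosing $T$ larger than a uniform bound for all $|s_{\nu,ij}|$, the first maximum degenerates and $\tilde s$ coincides with $p_1^{*}s_1$ on $X\times(-\infty,-T]$ and with $p_1^{*}s_2$ on $X\times[T,\infty)$.

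Form the (infinite) intersection cycle $\tilde Z:=\tilde s^{(k)}\cdot(Y\times\RR)$. By the previous paragraph, $\tilde Z$ has product structure at each end:
$$\tilde Z\cap\bigl(X\times(-\infty,-T]\bigr)=(s_1^{(k)}\cdot Y)\times(-\infty,-T],\quad \tilde Z\cap\bigl(X\times[T,\infty)\bigr)=(s_2^{(k)}\cdot Y)\times[T,\infty).$$
Introduce the bounded rational function $\chi(t):=\min\bigl(T,\max(-T,t)\bigr)$ on $X\times\RR$, whose tropical divisor consists of the slices $\{t=-T\}$ and $\{t=T\}$ with weights $+1$ and $-1$. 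Because the product regions above reach into a full neighborhood of each of these slices, the intersection $\chi\cdot\tilde Z$ is supported only there and equals
$$\chi\cdot\tilde Z\ =\ (s_1^{(k)}\cdot Y)\times\{-T\}\ -\ (s_2^{(k)}\cdot Y)\times\{T\}.$$
The projection $p_1$ is finite on this support, so we may push forward to obtain $p_{1*}(\chi\cdot\tilde Z)=s_1^{(k)}\cdot Y-s_2^{(k)}\cdot Y$. Since $\chi$ is a bounded rational function on $\tilde Z$ (via restriction), the right-hand side lies in the subgroup of $Z_{l-k}(X)$ generated by such pushforwards and is therefore rationally equivalent to zero, proving $[s_1^{(k)}\cdot Y]=[s_2^{(k)}\cdot Y]$.

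The delicate step to be spelled out is the evaluation of $\chi\cdot\tilde Z$: one must observe that for $T$ past a uniform bound on the $|s_{\nu,ij}|$, the section $\tilde s$ has no corners in a neighborhood of either slice $t=\pm T$, so $\tilde Z$ is genuinely a product there and the only contributions to the intersection come from the two corner divisors of $\chi$ meeting a trivially extended cycle. The gluing of $\tilde s$, its boundedness, and the extension of Definition \ref{def-intersectwithsection} to the infinite setting are then all routine.
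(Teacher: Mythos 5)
Your proof is correct in substance but takes a genuinely different route from the paper. The paper passes to the universal covering $\widetilde{X}$ (which forces the infinite-cycle formalism of Definitions \ref{def-infinitecycle}--\ref{def-infinitevectorbundles}), splits $p^{*}F$ into line bundles via Theorem \ref{thm-bundleonsimplyconnectedspacesplits}, applies Lemma \ref{lemma-boundedsectionsinlinebundleareequivalent} to each summand, and descends to write $s_1^{(k)}\cdot Y-s_2^{(k)}\cdot Y$ directly as $h\cdot\xi_2\cdots\xi_k\cdot Y$ with $h$ a bounded rational function on $X$ itself, so only the narrowest form of rational equivalence is invoked. You instead build the tropical interpolation $\tilde s=\bigl(s_1\oplus(t\odot s_2)\bigr)\odot\bigl(-\max(0,t)\bigr)$ on $p_1^{*}F$ over $X\times\RR$; this works because coordinatewise maximum and adding a common constant to all fibre coordinates commute with the $G(r)$-transition action, so your gluing identity $\tilde s_{i'j}=\tilde s_{i,\sigma(j)}+\varphi_j$ and the boundedness and end-behaviour of $\tilde s$ are all correct, and the difference of the two intersection cycles is realized as $p_{1*}(\chi\cdot\tilde Z)$ for the bounded function $\chi$. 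This is a degeneration argument in the spirit of \cite{AR08} and it avoids the universal cover, the splitting theorem, and in fact the infinite machinery altogether: $X\times\RR$ is an ordinary tropical cycle, so your appeal to the infinite framework is unnecessary. What your route needs instead, and should be stated explicitly, are two standard compatibilities: locality and cross-product compatibility of the divisor construction (so that near $t=\pm T$ one has $\tilde Z=(s_\nu^{(k)}\cdot Y)\times\RR$ and $\chi\cdot\tilde Z$ consists exactly of the two weighted slices, the middle region contributing nothing since $\chi$ is affine there), and the fact that push-forwards of divisors of bounded rational functions are rationally equivalent to zero, i.e.\ that $p_{1*}$ descends to $A_{*}$; the latter is part of the \cite{AR08} framework and is also presupposed by the paper in Theorem \ref{thm-propertiesofchernclasses}(c), but it is an extra input compared with the paper's proof, which never leaves $X$.
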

\begin{proof}
  Let $p: |\widetilde{X}| \rightarrow |X|$ be the universal covering
  space of $|X|$. We can locally equip $|\widetilde{X}|$ with the tropical
  structure inherited form $X$ and obtain an infinite tropical cycle
  $\widetilde{X}$ according to definition \ref{def-infinitecycle}. Moreover,
  pulling back $F$ along $p$, we obtain a tropical vector bundle $p^{*}F$ on
  $\widetilde{X}$ according to definition \ref{def-infinitevectorbundles}.
  As $\widetilde{X}$ is simply connected we can conclude by lemma
  \ref{thm-bundleonsimplyconnectedspacesplits} that $p^{*}F=L_1 \oplus \ldots \oplus  L_r$
  for some infinite tropical line bundles $L_1,\ldots,L_r$ on $\widetilde{X}$.
  Hence, the bounded rational sections $p^{*}s_1$ and $p^{*}s_2$ correspond to $r$ infinite tropical Cartier
  divisors as in definition \ref{def-infiniteCartierdivisors} each, which we will denote
  by $\varphi_1,\ldots,\varphi_r$ and $\psi_1,\ldots,\psi_r$, respectively. By lemma
  \ref{lemma-boundedsectionsinlinebundleareequivalent} we can conclude that for all $i$
  these Cartier divisors differ by bounded infinite rational functions only,
  i.e. $\varphi_i-\psi_i=h_i$ for some bounded infinite rational function $h_i$ on $\widetilde{X}$.
  In particular, $$\left(\sum_{1\leq j_1<\ldots<j_k \leq r} \varphi_{j_1} \cdots
  \varphi_{j_k}-\sum_{1\leq j_1<\ldots<j_k \leq r} \psi_{j_1} \cdots
  \psi_{j_k}\right) \cdot \widetilde{X} = \widetilde{h} \cdot \widetilde{\xi_2}
  \cdots \widetilde{\xi_k} \cdot \widetilde{X}$$ with a bounded infinite rational
  function $\widetilde{h}$ and infinite Cartier divisors $\widetilde{\xi_i}$.
  Then we can define a rational function $h$, which is then also bounded, and Cartier divisors $\xi_i$ on $X$ as follows:
  Let $U \subseteq |X|$ and $\widetilde{U} \subseteq |\widetilde{X}|$ be open subsets
  such that $p|_{\widetilde{U}}:~\widetilde{U}~\rightarrow~U$ is
  bijective with inverse map $p':U \rightarrow \widetilde{U}$.
  Then we locally define $h|_U:={(p')}^{*}\widetilde{h}|_{\widetilde{U}}$ and $\xi_i|_U:={(p')}^{*}\widetilde{\xi_i}|_{\widetilde{U}}$. Note that
  $h$ and $\xi_i$ are well-defined as the Cartier divisors $\varphi_i$ and $\psi_i$, respectively, are the
  same on every possible set $\widetilde{U} \stackrel{\cong}{\rightarrow} U$. As we locally have
  $$(s_1^{(k)} \cdot Y) \cap U = p_{*} \left( \sum\limits_{1\leq j_1<\ldots<j_k \leq r} \varphi_{j_1} \cdots \varphi_{j_k} \cdot (p')_{*}(Y \cap U)\right)$$
  and
  $$(s_2^{(k)} \cdot Y) \cap U = p_{*} \left( \sum\limits_{1\leq j_1<\ldots<j_k \leq r} \psi_{j_1} \cdots \psi_{j_k} \cdot (p')_{*}(Y \cap U)\right)$$
  we can conclude that
  $$(s_1^{(k)}-s_2^{(k)}) \cdot Y = h \cdot \xi_2 \cdots \xi_k \cdot Y,$$
  which proves the claim.
\end{proof}

Now we are ready to give a definition of Chern classes:

\begin{definition}[Chern classes] \label{def-chernclasses}
  Let $\pi: F \rightarrow X$ be a vector bundle of rank $r$ admitting bounded rational sections.
  For $k \in \{1,\ldots,r\}$ we define the $k$-th Chern class
  of $F$ to be the endomorphism
  $$c_k(F): A_{*}(X) \rightarrow A_{*}(X): [Y] \mapsto [s^{(k)} \cdot Y],$$
  where $A_{*}(X)= \bigoplus_{i} A_i(X)$ and $s:X \rightarrow F$ is any bounded rational section. Note that
  the map $c_k(F)$ is well-defined by lemma \ref{lemma-rationalequivalence} and independent of the
  choice of the rational section $s$ by theorem \ref{thm-chernclasswelldefined}.
  Moreover, we define $c_0(F):A_{*}(X) \rightarrow A_{*}(X)$ to be
  the identity map and $c_k(F):A_{*}(X) \rightarrow A_{*}(X)$ to
  be the zero map for all $k \not\in \{0,\ldots,r\}$. To stress
  the character of an intersection product of $c_k(F)$ we usually
  write $c_k(F) \cdot Y$ instead of $c_k(F)(Y)$ for $Y \in A_{*}(X)$.
\end{definition}

\begin{remark}
  Note that lemma \ref{lemma-equivalentsectionsinsiobundles}
  implies that isomorphic vector bundles have the same Chern classes.
\end{remark}

As announced in the beginning we finish this section with proving
some basic properties of Chern classes:

\begin{theorem}[Properties of Chern classes] \label{thm-propertiesofchernclasses}
  Let $\pi: F \rightarrow X$ and $\pi': F' \rightarrow X$ be vector bundles of rank $r$ and $r'$, respectively, admitting bounded
  rational sections. Moreover, let $f: \widetilde{X} \rightarrow X$ be a morphism of
  tropical cycles. Then the following holds:
  \begin{enumerate}
    \item $c_i(F)=0$ for all $i \not\in \{0,\ldots,\rank(F)\}$,
    \item $c_i(F) \cdot ( c_j(F') \cdot Y) = c_j(F') \cdot (c_i(F) \cdot Y)$ for all $Y \in A_{*}(X)$,
    \item $f_{*}(c_i(f^{*}F) \cdot Y) = c_i(F) \cdot f_{*}(Y)$ for all $Y \in A_{*}(\widetilde{X})$,
    \item $c_i(f^{*}F) \cdot f^{*}(Y) = f^{*}(c_i(F) \cdot Y)$ for all $Y \in A_{*}(X)$ if $X$ and $\widetilde{X}$ are smooth varieties,
    \item $c_k(F \oplus F') = \sum_{i+j=k} c_i(F) \cdot c_j(F')$
    \item $c_1(F) \cdot Y = \calD(F) \cdot Y$ for all $Y \in A_{*}(X)$ if $r=\rank(F)=1$, where $\calD(F)$ is the Cartier divisor class associated to $F$.
  \end{enumerate}
\end{theorem}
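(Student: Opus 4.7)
The plan is to verify the six properties essentially one at a time, reducing each to the corresponding property for intersections with rational functions/Cartier divisors. Throughout, I would fix a common open covering $U_1,\ldots,U_s$ on which bounded rational sections $s$ of $F$ and $s'$ of $F'$ are given, and work with the local coordinate functions $s_{ij} := p_j^{(i)} \circ \Phi_i \circ s$ and $s'_{ij}$, which are bounded rational functions on $U_i$.

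Part (a) is immediate from the definition. For (b), the defining formula of $s^{(i)} \cdot Y$ expresses it as a sum of intersections with products of rational functions, so two applications of Chern classes correspond to intersection with a sum of products of rational functions in $\mathcal{K}^{*}(U_i)$; commutativity then follows from commutativity and associativity of intersection with rational functions (standard for tropical intersection theory, cf. \cite{AR07}). Part (f) is immediate once one notes that for a rank $1$ bundle, $s^{(1)} \cdot Y$ is precisely the intersection of $Y$ with the Cartier divisor $\calD(s)$ representing $\calD(F)$, by remark \ref{remark-welldefinedcartierdivisorclass}.

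For (e), the key observation is that a pair of bounded rational sections $(s,s')$ of $F$ and $F'$ naturally glues to a bounded rational section $\widetilde{s}$ of $F \oplus F'$, whose local coordinates on $U_i$ are $s_{i1},\ldots,s_{ir},s'_{i1},\ldots,s'_{ir'}$. Then the combinatorial identity
\[
\sum_{1\leq j_1 < \ldots < j_k \leq r+r'} \widetilde{s}_{ij_1} \cdots \widetilde{s}_{ij_k}
= \sum_{i+j=k} \left(\sum_{j_1<\ldots<j_i \leq r} s_{ij_1}\cdots s_{ij_i}\right)\left(\sum_{\ell_1<\ldots<\ell_j\leq r'} s'_{i\ell_1}\cdots s'_{i\ell_j}\right)
\]
splits the defining sum for $c_k(F \oplus F')$ into the Whitney sum on the right. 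Combined with (b) for reordering, this yields the formula after intersecting with $Y$.

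For (c) and (d), I would first show that if $s: X \to F$ is a bounded rational section, then $f^{*}s: \widetilde{X} \to f^{*}F$ is again bounded, with local coordinates $(f^{*}s)_{ij} = s_{ij} \circ f$ on the pullback covering $\{f^{-1}(U_i)\}$. Property (c) then follows by choosing $s$ to compute both sides: locally $c_i(f^{*}F) \cdot Y$ involves products of the $s_{ij}\circ f$ intersected with $Y$, and the projection formula for intersection of a rational function pulled back along $f$ with a cycle on $\widetilde{X}$ yields the desired equality after pushing forward. Property (d) is analogous using the pullback compatibility of rational-function intersections on smooth varieties (where rational equivalence classes pull back properly), applied term by term. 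The main obstacle here is checking that bounded pull-back sections are compatible with the local descriptions, and quoting (or verifying) the projection and pull-back formulas for intersection with a bounded rational function at the right level of generality; everything else is the combinatorics of elementary symmetric polynomials and careful local bookkeeping.
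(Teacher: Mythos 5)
Your proposal is correct and follows essentially the same route as the paper: (a), (b), (e), (f) come from the definition, commutativity of the intersection product, the elementary-symmetric-function identity for the glued section of $F \oplus F'$, and remark \ref{remark-welldefinedcartierdivisorclass}, while (c) and (d) are obtained by computing with the pull-back section $f^{*}s$ and invoking the projection formula, respectively the pull-back compatibility on smooth varieties from \cite{A09}. The only difference is that you make explicit some details (the combinatorial identity for (e), boundedness and local coordinates of $f^{*}s$) that the paper treats as immediate.
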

\begin{proof}
  Properties (a) and (e) follow immediately from definition
  \ref{def-chernclasses}, property (b) follows from the fact that
  the intersection product is commutative and property (f) follows
  from remark \ref{remark-welldefinedcartierdivisorclass}.\\
  (c): The projection formula implies
    $$f_{*}(c_i(f^{*}F) \cdot Y)= f_{*}([(f^{*}s)^{(i)} \cdot Y])= [s^{(i)} \cdot f_{*}Y] = c_i(F) \cdot f_{*}Y,$$
    where $s$ is any bounded rational section of $F$.\\
  (d): Applying \cite[theorem 3.2 (c) and (f)]{A09} we obtain
    $$c_i(f^{*}F) \cdot f^{*}Y=[(f^{*}s)^{(i)} \cdot f^{*}Y]=[f^{*}(s^{(i)} \cdot Y)]=f^{*}[s^{(i)} \cdot Y]= f^{*}(c_i(F) \cdot Y),$$
    where $s$ is again any bounded rational section of $F$.
\end{proof}

\begin{remark}
  In ``classical'' algebraic geometry even the following, generalized version of property (e) is true:
  Let $0 \rightarrow F' \rightarrow F \rightarrow F'' \rightarrow 0$
  be an exact sequence of vector bundles, then $c_k(F) = \sum_{i+j=k} c_i(F') \cdot
  c_j(F'')$. In the tropical world it is not entirely clear what an
  exact sequence of tropical vector bundles should be.
  Nevertheless, in some sense the ``classical'' statement is true in tropical
  geometry as well: Let $\pi_1: F_1 \rightarrow X$ and $\pi_2: F_2 \rightarrow
  X$ be tropical vector bundles of rank $r_1$ and $r_2$, respectively,
  and let $U_1,\ldots,U_s$ be an open covering of $X$ such that all requirements of definition
  \ref{def-vectorbundle} are fulfilled for $F_1$ and $F_2$ simultaneously. Moreover,
  let $f:F_1 \rightarrow F_2$ be an injective morphism of tropical vector bundles such that
  $(\Phi_i^{F_2} \circ f \circ (\Phi_i^{F_1})^{-1})(U_i \times
  \RR^{r_1}) = U_i \times \langle e_{i_1},\ldots, e_{i_{r_1}}
  \rangle_{\RR}$ for all $i$, i.e. such that the image of $F_1$
  under $f$ is a subbundle $F'$ of $F_2$ (cf. definition
  \ref{def-subbundles}). Then we can conclude by remark
  \ref{remark-subbundles} that $F_2$ is decomposable into $F_2=F'
  \oplus F'' \cong F_1 \oplus F''$ for some other subbundle $F''$
  of $F_2$. Hence we can conclude by theorem \ref{thm-propertiesofchernclasses}
  that $c_k(F_2) = \sum_{i+j=k} c_i(F_1) \cdot c_j(F'')$.
\end{remark}

\section{Vector bundles on an elliptic curve} \label{sec-ellipticcurve}

In this section we will give a complete classification of all
vector bundles on an elliptic curve up to isomorphism. One
characteristic to distinguish different bundles will be the
following:

\begin{definition}[Degree of a vector bundle]
  Let $X:=X_2$ be the curve from \cite[example 5.5]{AR07} and let
  $\pi: F \rightarrow X$ be a vector bundle of rank $r$. We define the
  \emph{degree} of $F$ to be the number $$\deg(F) := \deg(c_1(F) \cdot
  X).$$
\end{definition}

As already advertised in example \ref{ex-ellipticcurve} vector
bundles on the elliptic curve $X$ can be described by a single
transition function. We will prove this fact in the following
lemma:

\begin{lemma} \label{lemma-needonlyonetransitionmap}
  Again, let $X:=X_2$ be the curve from \cite[example 5.5]{AR07} and let
  $\pi: F \rightarrow X$ be a vector bundle of rank $r$.
  Then $F$ is isomorphic to a vector bundle $\pi': F' \rightarrow X$ that
  admits an open covering $U_1',\ldots,U_s'$ and transition maps
  $M_{ij}'$ such that at most one transition map is non-trivial.
\end{lemma}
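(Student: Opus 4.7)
The plan is to exploit the covering $\{U_1,U_2,U_3\}$ of $X$ from Example \ref{ex-ellipticcurve}: each $U_i$ is homeomorphic to an open interval (hence simply connected), each pairwise intersection $U_i \cap U_j$ is a single connected arc, and $U_1 \cap U_2 \cap U_3 = \emptyset$. Applying the corollary following Theorem \ref{thm-bundleonsimplyconnectedspacetrivial} to the restricted bundle $\pi^{-1}(U_i) \to U_i$ yields a trivialization $\Phi_i:\pi^{-1}(U_i) \stackrel{\cong}{\to} U_i \times \RR^r$ for each $i$. Via Definition \ref{def-identifyequalbundles}, I may therefore assume from the outset that $F$ is described by this three-set covering; let $M_{12}, M_{23}, M_{31}$ denote the resulting transition maps.

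The strategy is then to modify two of these three maps by a bundle isomorphism (using Lemma \ref{lemma-isomorphicbundles}), leaving only $M_{31}$ non-trivial. Set $E_1$ to be constantly equal to $E$ on $U_1$. On the connected arc $U_1 \cap U_2$, each finite entry of $M_{12}^{-1}$ is a regular invertible function, and since $U_2$ is simply connected these entries admit unique regular invertible continuations to all of $U_2$ (by exactly the continuation argument used in the proof of Theorem \ref{thm-bundleonsimplyconnectedspacesplits}). This produces a map $E_2 : U_2 \to G(r)$ with $E_2|_{U_1 \cap U_2} = M_{12}^{-1}$. Analogously, extend $E_2 \odot M_{23}^{-1}$ from $U_2 \cap U_3$ to a map $E_3 : U_3 \to G(r)$.

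Now define $F'$ by gluing along $\{U_1,U_2,U_3\}$ with the modified transition maps $M'_{ij} := E_j \odot M_{ij} \odot E_i^{-1}$. By construction $M'_{12} = E$ on $U_1 \cap U_2$ and $M'_{23} = E$ on $U_2 \cap U_3$, while $M'_{31} = M_{31} \odot E_3^{-1}$ on $U_3 \cap U_1$ may well be non-trivial (morally it records the monodromy of $F$ around the loop). Lemma \ref{lemma-isomorphicbundles} applied with these $E_i$ yields the desired isomorphism $F \cong F'$, exhibiting $F$ as isomorphic to a bundle with at most one non-trivial transition map.

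The main obstacle is justifying the unique continuation of the finite entries of $M_{12}^{-1}$ and $E_2 \odot M_{23}^{-1}$ from a sub-arc to the full arc $U_i$. This is exactly where the simple connectivity of each $U_i$ enters in an essential way; the situation is particularly clean here because $X$ is one-dimensional and $U_1 \cap U_2 \cap U_3 = \emptyset$, so no cocycle compatibility on triple overlaps needs to be verified when defining $E_2$ and $E_3$, and the constant permutation parts on the connected intersections $U_i \cap U_j$ (cf.\ the discussion in Theorem \ref{thm-bundleonsimplyconnectedspacesplits}) ensure that each extension stays inside $G(r)$.
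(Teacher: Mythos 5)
There is a genuine gap at the very first step. The corollary following Theorem \ref{thm-bundleonsimplyconnectedspacetrivial} applies to vector bundles over a simply connected \emph{tropical cycle}, whereas each $U_i$ in Example \ref{ex-ellipticcurve} is merely an open subset of $X$; the restriction $\pi^{-1}(U_i)\rightarrow U_i$ is not a tropical vector bundle in the sense of Definition \ref{def-vectorbundle} (the base is not a cycle), and no triviality statement for such restrictions is proved or cited in the paper. Moreover, Definition \ref{def-identifyequalbundles} only allows you to pass to a \emph{common refinement} of coverings, not to coarsen: the given bundle comes with its own covering $U_1,\ldots,U_s$, which will in general be strictly finer than the three arcs, and replacing it by the three-set covering requires you to glue the finitely many given trivializations over each arc into a single one whose comparison with the original charts has regular invertible (or constantly $-\infty$) entries. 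That gluing is precisely where the work of the lemma lies, so as written your reduction assumes a statement essentially equivalent to what is to be proved.

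The second half of your argument is sound and is in fact the same mechanism the paper uses: uniquely continue the finite entries of the transition matrices (regular invertible functions on connected overlaps) across the connected sets, set $E_i:U_i\rightarrow G(r)$ accordingly, and apply Lemma \ref{lemma-isomorphicbundles} with $M'_{ij}=E_j\odot M_{ij}\odot E_i^{-1}$ to kill all transition maps but one. The repair is therefore to drop the appeal to the corollary and run exactly this extension-and-conjugation argument on the \emph{given} covering: after arranging that all $U_i$ and all intersections are connected and that the sets are numbered consecutively around the curve, one trivializes the transition maps $M_{i,i+1}$ one at a time (extending the diagonal part $D_{j_0}$ of a chosen $M_{j_0,j_0+1}$ backwards along the chain of sets and conjugating), absorbing everything into a single remaining map $M_{12}$. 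This is the paper's proof; your three-chart picture is a correct description of the end result but cannot serve as the starting point without that argument.
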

\begin{proof}
  Let $U_1,\ldots,U_s$ be the open covering with transition maps
  $M_{ij}$ for $F$ according to definition \ref{def-vectorbundle}.
  We may assume that all sets $U_i$ are connected and that for all $i,j$ the intersections $U_i \cap
  U_j$ are connected as well. Moreover, we may assume that the sets $U_i$ are numbered
  consecutively as shown in the figure. For simplicity of notation we will consider our indices modulo $s$.
  \begin{center}
    \begin{picture}(0,0)%
\includegraphics{pic/elliptic_curve2.pstex}%
\end{picture}%
\setlength{\unitlength}{1973sp}%
\begingroup\makeatletter\ifx\SetFigFont\undefined%
\gdef\SetFigFont#1#2#3#4#5{%
  \reset@font\fontsize{#1}{#2pt}%
  \fontfamily{#3}\fontseries{#4}\fontshape{#5}%
  \selectfont}%
\fi\endgroup%
\begin{picture}(7667,2169)(3851,-3583)
\put(3866,-2136){\makebox(0,0)[lb]{\smash{{\SetFigFont{6}{7.2}{\familydefault}{\mddefault}{\updefault}{\color[rgb]{0,0,0}$X$}%
}}}}
\put(7276,-2761){\makebox(0,0)[lb]{\smash{{\SetFigFont{6}{7.2}{\familydefault}{\mddefault}{\updefault}{\color[rgb]{0,0,0}\Huge $=$}%
}}}}
\put(9791,-3146){\makebox(0,0)[lb]{\smash{{\SetFigFont{6}{7.2}{\familydefault}{\mddefault}{\updefault}{\color[rgb]{0,0,0}$U_3$}%
}}}}
\put(11196,-3246){\makebox(0,0)[lb]{\smash{{\SetFigFont{6}{7.2}{\familydefault}{\mddefault}{\updefault}{\color[rgb]{0,0,0}$U_4$}%
}}}}
\put(8646,-2351){\makebox(0,0)[lb]{\smash{{\SetFigFont{6}{7.2}{\familydefault}{\mddefault}{\updefault}{\color[rgb]{0,0,0}$U_1$}%
}}}}
\put(8626,-3136){\makebox(0,0)[lb]{\smash{{\SetFigFont{6}{7.2}{\familydefault}{\mddefault}{\updefault}{\color[rgb]{0,0,0}$U_2$}%
}}}}
\put(10801,-1936){\makebox(0,0)[lb]{\smash{{\SetFigFont{6}{7.2}{\familydefault}{\mddefault}{\updefault}{\color[rgb]{0,0,0}$U_5$}%
}}}}
\end{picture}%

  \end{center}
  We can write every map $M_{i,i+1}$, $i=1,\ldots,s$, as
  $$M_{i,i+1}(x)=D(\varphi_{i,i+1}^{(1)},\ldots,\varphi_{i,i+1}^{(r)})(x) \odot
  A_{\sigma_{i,i+1}} =: D_{i}(x) \odot P_{i}$$ for some regular invertible functions
  $\varphi_{i,i+1}^{(k)} \in \mathcal{O}^{*}(U_i \cap U_{i+1})$ and
  permutations $\sigma_{i,i+1} \in S_r$.
  We will show that we can replace successively
  all the transition maps $M_{i,i+1}$ but one by the constant map $M_{i,i+1}': U_i \cap U_{i+1} \rightarrow G(r): x \mapsto E$
  and the resulting vector bundle $F'$ is isomorphic to $F$:
  Choose $j_0 \in \{2,\ldots,s\}$. Note that if we are given a regular invertible
  function $\varphi \in \mathcal{O}^{*}(U_i \cap U_{j})$ there is a unique
  regular invertible function $\widetilde{\varphi} \in \mathcal{O}^{*}(U_i)$
  such that $\widetilde{\varphi}|_{U_i \cap U_{j}}=\varphi$. As they are regular invertible functions, too,
  we can extend in exactly the same way the finite entries of the matrix
  $D_{j_0}$ along the chain $U_{j_0-1},U_{j_0-2},\ldots,U_{i+1}$ to any set
  $U_{i+1}$ for $i \in \{2,\ldots,j_0-1\}$. By abuse of notation we will denote this continuation of $D_{j_0}$
  as well by $D_{j_0}$.
  Now, we take $U_i' := U_i$ for all $i=1,\ldots,s$ and
  $$ M_{i,i+1}'(x) := \left\{ \begin{array}{ll} P_{j_0} \odot D_{j_0}(x) \odot M_{i,i+1}(x) \odot D_{j_0}(x)^{-1} \odot P_{j_0}^{-1}, & \text{if } i\in \{2,\ldots,j_0-1\}\\
  M_{i,i+1}(x), & \text{if } i\in \{j_0+1,\ldots,s\}.\end{array} \right.$$
  Moreover, we set $M_{12}'(x) := P_{j_0} \odot D_{j_0}(x) \odot D_{1}(x) \odot
  P_{1}$ and $M_{j_0,j_0+1}'(x) := E$. To check that the vector
  bundle $F'$ we obtain from this gluing data is isomorphic to $F$
  we apply lemma \ref{lemma-isomorphicbundles}: We set
  $$ E_i(x) := \left\{ \begin{array}{ll} D_{j_0}(x) \odot P_{j_0}, & \text{if } i \in \{2,\ldots,j_0\} \\
  E, & \text{else,}\end{array} \right.$$
  and get
  $$\begin{array}{rcl}
    (D_{j_0} \odot P_{j_0}) \odot (D_1 \odot P_1) &=& (D_{j_0} \odot P_{j_0} \odot D_1 \odot P_1) \odot E\\
    (D_{j_0} \odot P_{j_0}) \odot (D_2 \odot P_2) &=& (D_{j_0} \odot P_{j_0} \odot D_2 \odot P_2 \odot D_{j_0}^{-1} \odot P_{j_0}^{-1}) \odot (D_{j_0} \odot P_{j_0})\\
    \vdots & & \vdots \\
    E \odot (D_{j_0} \odot P_{j_0}) &=& E \odot (D_{j_0} \odot P_{j_0}).
  \end{array}$$
  This finishes our proof.
\end{proof}

To classify all vector bundles on our elliptic curve $X$ we give
now a non-redundant parametrization of all indecomposable vector
bundles on $X$. Arbitrary vector bundles are then just direct sums
of these building blocks.

\begin{theorem}[Vector bundles on elliptic curves] \label{thm-vecbundlesoverellcurves}
  Let $X:=X_2$ be the curve from \cite[example 5.5]{AR07}.
  Then the set of indecomposable vector bundles of rank $r$ and
  degree $d$ is in natural bijection with $\gcd(r,d) \cdot X$, i.e.
  with points of the curve $X$ stretched to $\gcd(r,d)$ times the original length.
\end{theorem}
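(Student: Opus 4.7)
The starting point is Lemma~\ref{lemma-needonlyonetransitionmap}: every rank-$r$ bundle on $X$ is isomorphic to one given by a covering $U_1,\ldots,U_s$ of $X$ with all transition maps trivial except one $M_{12}\colon U_1 \cap U_2 \to G(r)$. Since $U_1 \cap U_2$ is a connected arc with its intrinsic affine coordinate $x$, I may write $M_{12}(x) = D(\varphi_1,\ldots,\varphi_r)(x) \odot A_\sigma$ with $\varphi_i(x) = a_i + k_i x$ ($a_i \in \RR$, $k_i \in \ZZ$) and $\sigma \in S_r$. A proper $\sigma$-invariant subset $J \subsetneq \{1,\ldots,r\}$ yields a subbundle in the sense of Definition~\ref{def-subbundles} (the coordinate subspace $\langle e_j : j \in J\rangle$ is preserved by $M_{12}$), so by Remark~\ref{remark-subbundles} the bundle decomposes. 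Conversely, for $\sigma$ a single $r$-cycle no such $J$ exists, and one checks from the form of $M_{12}$ that no proper subbundle is possible; hence $F$ is indecomposable iff $\sigma$ is an $r$-cycle. Conjugating by a constant permutation matrix (an isomorphism in the sense of Lemma~\ref{lemma-isomorphicbundles}), I fix $\sigma = (1\,2\,\cdots\,r)$.

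Next I would analyze the remaining isomorphism equivalence via Lemma~\ref{lemma-isomorphicbundles}. An isomorphism is encoded by maps $E_i\colon U_i \to G(r)$, and the triviality of the other transition maps forces the $E_i$ for $i \geq 2$ together with $E_s$ on $U_s \cap U_1$ to glue into a single map on the complementary arc $X \setminus (U_1 \cap U_2)$. Its permutation part is therefore a fixed $\tau \in S_r$ commuting with $\sigma$, hence $\tau = \sigma^c$ for some $c \in \ZZ/r\ZZ$. Its diagonal part has entries $\psi_i(x) = b_i + m_i x$ (on the universal cover), where the two extensions of $\psi_i$ to $U_1 \cap U_2$ differ by the full-circle monodromy, an integer multiple of $L$ (the lattice length of $X$). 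Expanding the identity $E_2 \odot M_{12} \odot E_1^{-1} = M_{12}'$ then produces explicit transformation laws
\[
k_i' = k_{\sigma^c(i)} + m_i - m_{\sigma(i)}, \qquad a_i' - a_{\sigma^c(i)} - (b_i - b_{\sigma(i)}) \in L\ZZ,
\]
the $L\ZZ$-discrepancy being $\pm m_{\sigma(i)} L$ from the monodromy of $\psi_{\sigma(i)}$.

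From these transformation laws $d := \sum_i k_i$ is invariant; I would identify it with $\deg(F)$ by computing the bounded section that is zero on $U_2 \cup \cdots \cup U_s$ and extends to $U_1$ via $M_{12}^{-1}$ --- the sum of slope-changes of its $r$ component functions is exactly $\sum_i k_i$. Using the $m_i$-freedom with $c=0$ I normalize the slopes to $(k_1,\ldots,k_r) = (d,0,\ldots,0)$; the residual freedom then forces $m_i$ to be constant $m \in \ZZ$ (since $\sigma$ acts transitively), which shifts $\sum_i a_i$ by $rmL$, i.e.\ by $rL\ZZ$. Applying a cyclic shift $c \in \ZZ/r\ZZ$ followed by re-normalization of the slopes back to $(d,0,\ldots,0)$ adds a further shift of $\sum_i a_i$ by $cLd$ modulo $rL\ZZ$. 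Combining these two sources, $\sum_i a_i$ is well-defined in
\[
\RR \,\big/\, L\,(r\ZZ + d\ZZ) \;=\; \RR \,\big/\, L\gcd(r,d)\ZZ,
\]
which is precisely a circle of length $\gcd(r,d) \cdot L$, i.e.\ $\gcd(r,d) \cdot X$. The bijection sends $[F]$ to $[\sum_i a_i]$; surjectivity is immediate by constructing, for every $p \in \gcd(r,d) \cdot X$, the bundle with $a_1 = p$ and all other $a_i = 0$.

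The main obstacle is the last step's bookkeeping: verifying that composing a cyclic shift by $c$ with the re-normalization of the slopes back to the canonical form $(d,0,\ldots,0)$ shifts $\sum_i a_i$ by precisely $cLd$ modulo $rL\ZZ$, and not by some other combination. This identifies the full lattice of equivalences as $L(r\ZZ + d\ZZ) = L\gcd(r,d)\ZZ$, which is exactly what produces the stretching factor $\gcd(r,d)$ in the statement.
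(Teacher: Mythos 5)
Your proposal is correct and follows essentially the same route as the paper: reduce to a single non-trivial transition map via Lemma~\ref{lemma-needonlyonetransitionmap}, use indecomposability to force the permutation part to be the $r$-cycle $(12\ldots r)$, and then work out the residual isomorphism equivalence via Lemma~\ref{lemma-isomorphicbundles}, with your monodromy terms $m_iL$ playing the role of the paper's $\alpha_i L$ and your lattice $L(r\ZZ+d\ZZ)$ matching the paper's condition $-c=r\alpha_r-kd$. The only (cosmetic) difference is that you track the invariant $\sum_i a_i$ after normalizing the slopes to $(d,0,\ldots,0)$, whereas the paper first normalizes the whole diagonal part to $D(\varphi',0,\ldots,0)$ and then determines which translates $\varphi'\mapsto\varphi'+cL$ yield isomorphic bundles.
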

\begin{proof}
  Let $\pi: F \rightarrow X$ be an indecomposable vector bundle of rank $r$ with
  open covering $U_1,\ldots,U_s$ and transition maps
  $M_{ij}$ according to definition \ref{def-vectorbundle}.
  Again, we may assume that all sets $U_i$ are connected, that for all $i,j$ the intersections $U_i \cap
  U_j$ are connected as well and that the sets $U_i$ are numbered
  consecutively. Moreover, by lemma \ref{lemma-needonlyonetransitionmap} we
  may assume that $M_{12}$ is the only non-trivial transition map.
  Let $M_{12}(x)= D(\varphi_1,\ldots,\varphi_r)(x) \odot A_\sigma =: D(x) \odot A_\sigma$
  for some regular invertible functions $\varphi_1,\ldots,\varphi_r
  \in \mathcal{O}^{*}(U_1 \cap U_2)$ and a permutation $\sigma \in
  S_r$. As $F$ is indecomposable $\sigma$ must by a single cycle.
  Hence there exists $\varrho \in S_r$ such that
  $\varrho\sigma\varrho^{-1}=(12 \ldots r)$. We will apply lemma \ref{lemma-isomorphicbundles}
  to show that we can replace $M_{12}(x)$ by $M_{12}'(x):= A_\varrho \odot D(x) \odot A_{\varrho^{-1}} \odot
  A_{(12 \ldots r)}$ without changing the isomorphism class of
  $F$: We set $E_i(x):=A_\varrho$ for all $x$ and all $i$ and
  obtain
  $$\begin{array}{rcl}
    A_\varrho \odot (D(x) \odot A_\sigma) &=& (A_\varrho \odot D(x) \odot A_{\varrho^{-1}} \odot A_{(12 \ldots r)}) \odot A_\varrho\\
    A_\varrho \odot E &=& E \odot A_\varrho\\
    \vdots & & \vdots \\
    A_\varrho \odot E &=& E \odot A_\varrho.
  \end{array}$$
  Hence we may assume that $\sigma=(12 \ldots r)$. Our next step
  is to apply lemma \ref{lemma-isomorphicbundles} to show that we may replace $D(x)=D(\varphi_1,\ldots,\varphi_r)$
  by $D'(x)=D(\varphi',0,\ldots,0)$ for some $\varphi' \in \mathcal{O}^{*}(U_1 \cap U_2)$
  without changing the isomorphism class of $F$. For $i=1,\ldots,r$ let
  $\alpha_i$ be the slope of $\varphi_i$ and let $L$ be the (lattice) length
  of our curve $X$. For $i=2,\ldots,r$ we set $\delta_i := \sum_{j=i}^r (j-i+1) \cdot \alpha_j$.
  Moreover, we define $\varphi' := \varphi_1 + \ldots + \varphi_r - \delta_2 L$.
  Note that if we are given a regular invertible
  function $\psi \in \mathcal{O}^{*}(U_i \cap U_{j})$ there is a unique
  regular invertible function $\widetilde{\psi} \in \mathcal{O}^{*}(U_i)$
  such that $\widetilde{\varphi}|_{U_i \cap U_{j}}=\varphi$. Hence we can
  extend our regular invertible functions $\varphi_1,\ldots,\varphi_r$ along the chain
  $U_2,U_3,\ldots,U_s,U_1$ to any of the sets $U_1,\ldots,U_s$. Note
  that on $U_1 \cap U_2$ the extension of $\varphi_i$ to $U_2$ and the extension of $\varphi_i$ to $U_1$
  differ exactly by $\alpha_i L$. We use these continuations to
  define the maps $E_i$:
  $$E_i(x) := D(\widetilde{\varphi_2}+\ldots+\widetilde{\varphi_r}-\delta_2 L, \widetilde{\varphi_3}+\ldots+\widetilde{\varphi_r}-\delta_3
  L,\ldots,\widetilde{\varphi_r}-\delta_r L,0),$$
  where for entries of $E_i$ the map $\widetilde{\varphi_j}$ denotes the continuation of $\varphi_j$
  to $U_i$. Hence we obtain on $U_1 \cap U_2$:
  {\small $$\begin{array}{rcl}
    && E_2 \odot M_{12}\\ &=& D(\widetilde{\varphi_2}+\ldots+\widetilde{\varphi_r}-\delta_2 L,\ldots,\widetilde{\varphi_r}-\delta_r L,0)
    \odot (D(\varphi_1,\ldots,\varphi_r) \odot A_\sigma)\\
    &=& D(\varphi_2+\ldots+\varphi_r-\delta_2 L,\ldots,\varphi_r-\delta_r L,0)
    \odot (D(\varphi_1,\ldots,\varphi_r) \odot A_\sigma)\\
    &=& D(\varphi_1+\ldots+\varphi_r-\delta_2 L,\varphi_2+\ldots+\varphi_r-\delta_3 L,\ldots,\varphi_{r-1}+\varphi_r-\delta_r
    L,\varphi_r) \odot A_\sigma
  \end{array}$$}
  and
  {\small $$\begin{array}{rcl}
    && M_{12}' \odot E_1\\ &=& (D(\varphi_1+\ldots+\varphi_r-\delta_2
    L,0,\ldots,0) \odot A_\sigma) \odot D(\widetilde{\varphi_2}+\ldots+\widetilde{\varphi_r}-\delta_2 L,\ldots,\widetilde{\varphi_r}-\delta_r L,0)\\
    &=& (D(\varphi_1+\ldots+\varphi_r-\delta_2
    L,0,\ldots,0) \odot A_\sigma) \odot D(\varphi_2+\ldots+\varphi_r-\delta_3 L,\ldots,\varphi_r-\delta_{r-1} L,0)\\
    &=& D(\varphi_1+\ldots+\varphi_r-\delta_2 L,\varphi_2+\ldots+\varphi_r-\delta_3 L,\ldots,\varphi_{r-1}+\varphi_r-\delta_r
    L,\varphi_r) \odot A_\sigma.
  \end{array}$$}The other conditions are trivially fulfilled as $E_i|_{U_i \cap
  U_{i+1}} = E_{i+1}|_{U_i \cap U_{i+1}}$ for all $i \neq 1$.
  Hence we may assume that $M_{12}(x)=D(x) \odot A_\sigma =
  D(\varphi',0,\ldots,0)(x) \odot A_{(12\ldots r)}$. As $F$ is a
  vector bundle of degree $d$ the affine linear map $\varphi'$
  must have slope $-d$. Thus, the transition map $M_{12}$ is determined by the isomorphism class
  of $F$ up to translations of $\varphi'$. To prove the claim it remains to
  show that two vector bundles $F$ and $F'$ as above with transition maps
  $M_{12}(x)=D(\varphi,0,\ldots,0)(x) \odot A_{(12\ldots r)}$ and
  $M_{12}'(x)={D(\varphi+c L,0,\ldots,0)(x) \odot A_{(12\ldots r)}}$
  are isomorphic if and only if $c$ is an integer multiple of
  $\gcd(r,d)$: By lemma \ref{lemma-isomorphicbundles} $F$ and $F'$
  are isomorphic if and only if for all $i=1,\ldots,s$ there exists a map $E_i: U_i
  \rightarrow G(r)$ such that for all $i$ the equation $E_{i+1}(x) \odot M_{i,i+1}(x) =
  M_{i,i+1}'(x) \odot E_i(x)$ holds for all $x \in U_i \cap
  U_{i+1}$. As $M_{i,i+1}$ is trivial for all $i \neq 1$ these equations imply $E_i|_{U_i \cap U_{i+1}} =
  E_{i+1}|_{U_i \cap U_{i+1}}$ for all $i \neq 1$.
  Hence $F$ and $F'$ are isomorphic if and only if there exist a permutation $\tau \in S_r$ and regular
  invertible functions $\psi_1,\ldots,\psi_r \in \mathcal{O}^{*}(U_1 \cap U_2)$
  with continuations $\widetilde{\psi_1},\ldots,\widetilde{\psi_r}$ to
  all sets $U_1,\ldots,U_s$ along the chain $U_2,U_3,\ldots,U_s,U_1$ such that
  {\small $$(D(\widetilde{\psi_1},\ldots,\widetilde{\psi_r}) \odot A_\tau) \odot
  (D(\varphi,0,\ldots,0) \odot A_\sigma) = (D(\varphi+cL,0,\ldots,0) \odot
  A_\sigma) \odot (D(\widetilde{\psi_1},\ldots,\widetilde{\psi_r}) \odot
  A_\tau)$$}holds on $U_1 \cap U_2$. In particular, the last equation
  implies $A_\tau \odot A_\sigma=A_\sigma \odot A_\tau$ and hence
  $\tau = \sigma^k$ for some $k \in \ZZ$. Thus $F$ and $F'$ are
  isomorphic if and only if there exist $k \in \ZZ$ and $\psi_1,\ldots,\psi_r$
  as above such that
  {\small $$D(\widetilde{\psi_1},\ldots,\widetilde{\psi_k},\widetilde{\psi_{k+1}}+\varphi,\widetilde{\psi_{k+2}},\ldots,\widetilde{\psi_r}) \odot A_{\sigma^{k+1}} =
  D(\varphi+cL+\widetilde{\psi_r},\widetilde{\psi_1},\ldots,\widetilde{\psi_{r-1}}) \odot
  A_{\sigma^{k+1}}.$$}
  Let $\alpha_i$ be the slope of $\psi_i$. Then on $U_1 \cap U_2$ the continuation of $\psi_i$ to $U_2$
  and the continuation of $\psi_i$ to $U_1$ differ exactly by $\alpha_i
  L$. Hence we obtain the system of equations
  $$\begin{array}{lll}
    \psi_1 &=& \varphi +cL+\psi_r+\alpha_r L\\
    \psi_2 &=& \psi_1 + \alpha_1 L\\
    \vdots & & \vdots\\
    \psi_k &=& \psi_{k-1} + \alpha_{k-1} L\\
    \psi_{k+1} + \varphi &=& \psi_k + \alpha_k L\\
    \psi_{k+2} &=& \psi_{k+1} + \alpha_{k+1} L\\
    \vdots & & \vdots\\
    \psi_r &=& \psi_{r-1} + \alpha_{r-1} L.
  \end{array}$$
  In particular, we can conclude that $\alpha_1=\ldots=\alpha_k$
  and $\alpha_{k+1}=\ldots=\alpha_r$. Hence $F$ and $F'$ are
  isomorphic if and only if there exist $\alpha_1, \alpha_r, k \in \ZZ$ such that
  $$-c = (r-k) \cdot \alpha_r + k \cdot \alpha_1 \text{ and } \alpha_1
  = -d + \alpha_r,$$ or equivalently if and only if there exist
  $\alpha_r, k \in \ZZ$ with $$-c = r \alpha_r - k \cdot d.$$
  This finishes the proof.
\end{proof}

\begin{remark}
  Note that the claim of theorem \ref{thm-vecbundlesoverellcurves}
  coincides with the equivalent result in ``classical'' algebraic geometry
  (see \cite[theorem 7]{A57}).
\end{remark}

I would like to thank my advisor Andreas Gathmann for numerous
helpful discussions.

\begin {thebibliography}{MMMM}

\bibitem [A09]{A09}
  \arxiv{Lars Allermann}
        {Tropical intersection products on smooth varieties}
        {0904.2693}

\bibitem [A57]{A57}
  Michael Francis Atiyah, \emph{Vector bundles over an elliptic curve},
  Proc. London Math. Soc., Volume s3-7, Number 1, 1957, pp. 414-452(39).

\bibitem [AR07]{AR07}
  \arxiv{Lars Allermann, Johannes Rau}
        {First steps in tropical intersection theory}
        {0709.3705}

\bibitem [AR08]{AR08}
  \arxiv{Lars Allermann, Johannes Rau}
        {Tropical rational equivalence}
        {0811.2860}

\bibitem [GKM07]{GKM07}
  \arxiv{Andreas Gathmann, Michael Kerber, Hannah Markwig}
        {Tropical fans and the moduli spaces of tropical curves}
        {0708.2268}

\bibitem [T09]{T09}Carolin Torchiani, \emph{Line bundles on tropical varieties}, Diploma thesis, TU Kaiserslautern, work in progress.

\end {thebibliography}

\end{document}